\documentclass[reqno, 11pt]{amsart}
\usepackage[utf8]{inputenc}
\usepackage{color}
\usepackage{calc,amsfonts,amsthm,amscd,epsfig,psfrag,amsmath,amssymb,enumerate,graphicx}
\setlength{\oddsidemargin}{5mm}
\setlength{\evensidemargin}{5mm}
\setlength{\textwidth}{160mm}
\setlength{\headheight}{0mm}
\setlength{\headsep}{12mm}
\setlength{\topmargin}{0mm}
\setlength{\textheight}{220mm}
\setcounter{secnumdepth}{2}
\pagestyle{headings}

\setcounter{tocdepth}{1}

\usepackage[showlabels,sections,floats,textmath,displaymath]{}

\reversemarginpar
\newlength\fullwidth
\setlength\fullwidth{\textwidth+2\marginparsep}

\numberwithin{equation}{section}

\DeclareMathSymbol{\leqslant}{\mathalpha}{AMSa}{"36} 
\DeclareMathSymbol{\geqslant}{\mathalpha}{AMSa}{"3E} 
\DeclareMathSymbol{\eset}{\mathalpha}{AMSb}{"3F}     
 \def\1{\ifmmode {1\hskip -3pt
    \rm{I}} \else {\hbox {$1\hskip -3pt \rm{I}$}}\fi}

 




\newtheorem{theorem}{Theorem}[section] 
\newtheorem{lemma}[theorem]{Lemma} 
\newtheorem{proposition}[theorem]{Proposition} 
\newtheorem{rem}[theorem]{Remark} 
\newtheorem{corollary}[theorem]{Corollary} 
\newtheorem{remark}[theorem]{Remark}


\newcommand{\bP}{{\bf P}} 
\newcommand{\bE}{{\bf E}}


\newcommand{\bbE}{{\ensuremath{\mathbb E}} }

\newcommand{\bbN}{{\ensuremath{\mathbb N}} } 
 
\newcommand{\bbP}{{\ensuremath{\mathbb P}} } 
 
\newcommand{\bbR}{{\ensuremath{\mathbb R}} }

\newcommand{\bbZ}{{\ensuremath{\mathbb Z}} }

\newcommand{\gep}{\varepsilon}

\newcommand{\tf}{\textsc{f}}

\newcommand{\ind}{{\bf 1}}

\newcommand{\dd}{\mathrm{d}}

\title[Intermediate phase for oriented percolation]
{Existence of an intermediate phase for oriented percolation}

\author{Hubert Lacoin}
\address{CEREMADE, Place du Mar\'echal De Lattre De Tassigny
75775 PARIS CEDEX 16 - FRANCE}
\email{lacoin@ceremade.dauphine.fr}

\begin{document}

\begin{abstract}
We consider the following oriented percolation model of $\bbN\times \bbZ^d$:
we equip $\bbN\times \bbZ^d$ with the edge set $\{[(n,x),(n+1,y)] | n\in \bbN, x,y\in \bbZ^d\}$, and we say that 
each edge is open
with probability $p f(y-x)$ where $f(y-x)$ is a fixed non-negative compactly supported 
function on $\bbZ^d$ with $\sum_{z\in \bbZ^d} f(z)=1$ and $p\in [0,\inf (1/f)]$ is the percolation 
parameter. 
Let $p_c$ denote the percolation threshold and $Z_N$
the number of open oriented-paths of length $N$ starting from the origin, 
and investigate the growth-rate of $Z_N$ 
when percolation occurs.  
We prove that for if $d\ge 5$ and the function $f$ is sufficiently spread-out, then there exists a second threshold 
$p_c^{(2)}>p_c$ such that $Z_N/p^N$
decays exponentially fast for $p\in(p_c,p_c^{(2)})$ and does not so when $p> p_c^{(2)}$.
The result should extend to the nearest neighbor-model for high-dimension, 
and for the spread-out model when $d=3,4$. It is known (see \cite{cf:Bertin, cf:Lac}) that this phenomenon 
does not occur in dimension $1$ and $2$.
\\
 2000 \textit{Mathematics Subject Classification: 82D60, 60K37, 82B44}
  \\
  \textit{Keywords: Percolation, Growth model, Directed Polymers, Phase transition, Random media.}
\end{abstract}

\maketitle

\section{Introduction}
Oriented percolation was introduced by Hammersley \cite{cf:H} as a model for porous-media,
the oriented-character of the model can be seen as an attempt to take into account gravity when considering diffusion 
of liquid the medium (as opposed to ordinary percolation which was introduce at the same period).
Its main object is to study connectivity property of an inhomogeneous lattice in a given orientation.
In this paper we do not focus on the most common question for percolation, which is existence of infinite open path, but 
rather on their abundance. 
The main result we have is that if we consider a graph with a density of open edges barely sufficient 
to create infinite open-path, then the number of open path of a given length starting from the origin is much lower than its expected value. 
This implies (see \cite{cf:Y})
that diffusion does not occur in the medium. This contrasts with what happens when
there is an high density of edge and the transversal dimension is larger than $3$, in which case, the number of open path 
is roughly equal to its expected value when percolation occurs. 
Although we believe that this phenomenon holds with great generality,
with the method developed here, we can prove it only for a spread-out version of the model and possibly for 
very high-dimensional nearest neighbor model. The proof is based on size biasing argument and path counting.

\section{Model and result}

\subsection{The model}
We consider the following oriented independent edge-percolation model on $\bbN\times \bbZ^d$:
\begin{itemize}
\item Consider $f: \bbZ^d \to \bbR^+$ and with finite support and  $p\ge 0$ that satisfies 
\begin{equation}
 \sum_{z\in \bbZ^d} f(z)=1 \text{ and } p\le \inf_{z\in \bbZ^d} (1/f(z)):=p_{\max}. 
\end{equation}

\item We define
$\left(X_{(n,x),(n+1,y)}\right)_{n\ge 0, x,y\in \bbZ^d} $, our percolation environment, to be a field of independent 
Bernouilli random variable with parameter 
\begin{equation}
 \bar p(x-y):=p f(x-y),
\end{equation}
(let $\bbP_p$ and $\bbE_p$ denote the probability distribution and expectation).
 \item We say that an edge $\left((n,x),(n+1,y)\right)$ is \textit{open} if $X_{(n,x),(n+1,y)}=1$, and that an oriented path
$$(S_n)_{N_1\le n\le N_2},\quad  N_1\ge 0,\ N_2\in \bbN\cup\{\infty\}$$ is \textit{open} if all the edges 
$$\left((n,S_n),(n+1,S_{n+1})\right)_{N_1\le n\le N_2-1}$$ 
are open.
 \end{itemize}

The usual aim of oriented-percolation is to investigate the existence of infinite open paths and their properties, and more generally,
the connectivity properties of the oriented-network formed by the open-edges. And also the evolution of these property for fixed 
$f$ when $p$ varies.
In this paper we focus more specifically on two cases for the function $f$:

\begin{equation}\label{example}
\begin{split}
 f(z)&:=\frac{1}{2d}\ind_{|z|_1=1},\\
 f(z)&:=\frac{1}{(2L+1)^d}\ind_{|z|_\infty\le L},
\end{split}
 \end{equation}
(where $|z|_1:=\sum_{i=1}^d |z_i|$, $|z|_\infty:=\max_{ 1\le i \le d} |z_i|$)
The first case is called nearest neighbor oriented-percolation and is the one that has received 
the most interest in the physics literature.
The second case is called spread-out oriented-percolation with range $L$ ($L$ is to be thought as a large integer). 
Spread-out models have been studied by mathematicians for a long time for a technical reason:
whereas considering long-range (but finite) interactions instead of nearest-neighbor ones 
should not change the essential properties of a model, a lot of questions becomes 
easier to solve 
for these models when the range $L$ gets large
(an example of that is the use of long-range model to make the lace expansion work
 for all dimensions above the critical one, see 
\cite{cf:Slade}).
We will study also a generalized version of the spread-out oriented-percolation.

\medskip

One defines $\mathcal P$ to be the event of percolation from the origin

\begin{equation}
\mathcal P:= \{\exists (S_n)_{n\ge 0},\ S_0=0,\ \forall n\ge 0, X_{(n,S_n),(n+1,S_{n+1})}=1\}. 
\end{equation}

One defines the percolation threshold by
\begin{equation}\begin{split}
p_c&:=\inf\{p\ge 0\ |\ \bbP_p(\mathcal P)>0\}\\
&=\sup\{p\ge 0\ |\ \bbP_p(\mathcal P)=0\}.
\end{split}\end{equation}

\medskip

The aim of this paper is to discuss the asymptotics of the number of open path of length $N$ starting from the origin 
when percolation occurs.

Define
\begin{multline}\label{Zn}
Z_N(X)=Z_N:=\#\{\text{open oriented-paths of length } N \text{ starting from the origin}\}
\\:= \#\{S: \{0,\dots, N\}\ | \ S_0=0, S \text{ is open } \}.
\end{multline}
We want to compare the asymptotic behavior of $Z_N$ with the one of its expected value: When percolation does not occur, 
$Z_N=0$ for $N$ large enough when percolation occurs from the origin the questions we want to answer are:

\begin{itemize}
\item [(i)] is $Z_N$  asymptotically equivalent (up to a random positive constant)
to $ \bbE_p[Z_N]=p^N$.
\item [(ii)] has $Z_N$ the same exponential growth-rate that $\bbE_p [Z_N]$, i.e.\
is $\log Z_N$ equal to $N(1+o(1))\log p$. 
\end{itemize}

In order to better formulate these questions  we need to introduce some notation and technical results (that we prove in the next Section):

\subsection{Upper-growth rate and renormalized partition function}
Define 
\begin{equation}\begin{split}
W_N(X)&=W_N:=Z_N/\bbE_p[Z_N]=p^{-N}Z_N,\\
\mathcal X&:=\limsup_{N\to \infty} \frac 1 N \log Z_N \le \log p,
\end{split}\end{equation}
(take the convention that the $\limsup$ is equal to $-\infty$ when percolation does not occur) 
that we call respectively the renormalized partition function, and the upper-growth rate for the number of path.

\begin{proposition}[Properties of the upper-growth rate of $Z_N$]
\label{grow}
\begin{itemize}
 \item [(i)] On the event $\mathcal P$, $\mathcal X$ is a.s.\ constant and thus we can defined $\tf(p)$ by the relation
\begin{equation}
\begin{split}
 \mathcal X&=:\tf(p), \quad \bbP_p(\cdot \ | \ \mathcal P) \text{ a.s.} \quad \text{ if } \quad \bbP_p\left(\mathcal P\right)>0,\\
 \tf(p)&:=-\infty, \quad \text{ if } \quad \bbP_p\left(\mathcal P\right)=0.
\end{split}
\end{equation}
 \item [(ii)] The function 
\begin{equation} \label{decres}\begin{split}
[0,p_{\max}]&\to \{-\infty\} \cup \bbR\\
p&\mapsto \tf(p)-\log p.
\end{split}\end{equation}
is non-decreasing so that the threshold  

\begin{equation}\label{pc2}
 p_c^{(2)}:=\inf\{p\in[0,p_{\max}]\ | \ \tf(p)=\log p\}=\sup\{p\in[0,p_{\max}]\ | \ \tf(p)<\log p\}.
\end{equation}
is well defined whenever $\{p\in [0,p_{\max}]\ | \ \tf(p)=\log p\}$ is non-empty.
\end{itemize}
\end{proposition}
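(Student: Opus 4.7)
The plan for part (i) is to use temporal translation invariance of the i.i.d.\ environment together with a Kolmogorov-type $0$-$1$ argument. The key deterministic observation is that if $(n,x)$ is joined to the origin by an open oriented path, then concatenating this path with any length-$N$ open path from $(n,x)$ gives $Z_{n+N} \geq Z_N^{(n,x)}$, where $Z_N^{(n,x)}$ is the number of open oriented paths of length $N$ starting from $(n,x)$; consequently $\mathcal X \geq \mathcal X_{(n,x)} := \limsup_N \tfrac1N \log Z_N^{(n,x)}$. By temporal translation invariance $\mathcal X_{(n,x)}$ has the same law as $\mathcal X$, and since it is measurable with respect to edges at times $\geq n$, it is independent of the cluster of the origin up to time $n$. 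Set $c := \operatorname*{ess\,sup}(\mathcal X \mid \mathcal P) \leq \log p$, the upper bound and the finiteness coming from Markov applied to $\bbE_p[Z_N] = p^N$ together with Borel--Cantelli. For any $\varepsilon > 0$, translation invariance yields $\bbP_p(\mathcal X_{(n,x)} > c - \varepsilon) = \bbP_p(\mathcal X > c - \varepsilon,\, \mathcal P) > 0$. On $\mathcal P$ the cluster of the origin meets every time slice, and spatial ergodicity of the stationary field $x \mapsto \mathcal X_{(n,x)}$ (independent of that cluster up to time $n$) combined with Borel--Cantelli produces, a.s.\ on $\mathcal P$, some $(n,x)$ in the cluster with $\mathcal X_{(n,x)} > c - \varepsilon$; the deterministic inequality then gives $\mathcal X > c - \varepsilon$, and letting $\varepsilon \downarrow 0$ completes part (i).

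For part (ii), the naive monotone coupling $X^{p_1}_e \leq X^{p_2}_e$ through a common uniform only delivers $Z^{p_1}_N \leq Z^{p_2}_N$, hence the weaker conclusion $\tf(p_1) \leq \tf(p_2)$; the stronger inequality $\tf(p_2) - \tf(p_1) \geq \log(p_2/p_1)$ claimed in \eqref{decres} requires a finer argument. The right viewpoint is to write $W_N^p = \sum_S \prod_{e\in S} \tilde X_e^p$ with $\tilde X_e^p := X_e^p/(pf(e))$, so that $\bbE_p \tilde X_e^p = 1$ and $\operatorname{Var}(\tilde X_e^p) = (1-pf(e))/(pf(e))$ is strictly decreasing in $p$; thus $\tf(p) - \log p = \limsup \tfrac1N\log W_N^p$ is the quenched-minus-annealed free energy of a mean-one directed polymer whose environmental disorder weakens as $p$ increases, and such gaps are expected to be monotone in the disorder strength. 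To make this rigorous I would use a sprinkling coupling $X_e^{p_2} = X_e^{p_1} \vee Y_e$ with $Y_e \sim \mathrm{Ber}\bigl((p_2-p_1)f(e)/(1-p_1 f(e))\bigr)$ independent of $X^{p_1}$, compute $\bbE[Z_N^{p_2} \mid X^{p_1}]$ as a soft partition function dominating $(p_2/p_1)^N Z_N^{p_1}$, and equivalently verify by a Bernoulli integration-by-parts identity that $\tfrac{\mathrm d}{\mathrm dp}\,\tfrac1N\bbE_p[\log W_N^p] \geq 0$.

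The main obstacle lies entirely in (ii): any such identity controls the $p_2$-partition function only in conditional expectation, whereas the $\limsup$ defining $\tf(p)$ is an almost-sure object. Bridging the gap requires a concentration estimate for $\tfrac1N\log W_N^p$ around its expected value, most naturally via a fractional-moment bound $\bbE_p[(W_N^p)^\theta]$ for some $\theta \in (0,1)$, combined with a subadditivity argument identifying $\tf(p) - \log p$ with $\lim_N \tfrac1N\bbE_p[\log W_N^p]$. Once this identification is in place, the non-negativity of the derivative in $p$ translates directly into the claimed monotonicity and the threshold $p_c^{(2)}$ is well defined as in \eqref{pc2}.
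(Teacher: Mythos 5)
For part (i), your underlying idea (restart from a point in the cluster, use translation invariance and independence of the future) is the same as the paper's, but the step ``spatial ergodicity combined with Borel--Cantelli produces a.s.\ on $\mathcal P$ some $(n,x)$ in the cluster with $\mathcal X_{(n,x)}>c-\varepsilon$'' is not a complete argument as stated: the cluster at a fixed level $n$ is a finite set (since $f$ has compact support), so knowing the spatial density of good sites tells you nothing about whether a good site lies in the cluster, and the restart variables $\mathcal X_{(n,\cdot)}$, $\mathcal X_{(n',\cdot)}$ for different levels are highly correlated (they share the environment at all large times), so a second Borel--Cantelli does not apply directly. The paper sidesteps all of this with a martingale argument: it shows that for any $r$, writing $\kappa'=\bbP_p(\mathcal X>r)$ one has $\bbP_p(\mathcal X>r \mid \mathcal F_N)\ge \kappa'\,\ind_{\{Z_N>0\}}$ (pick any reached site at level $N$ and restart), and then lets $N\to\infty$ using L\'evy's $0$--$1$ law to conclude $\ind_{\{\mathcal X>r\}}\ge\kappa'\ind_{\mathcal P}$. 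That is the clean way to encode your idea and you should replace the ergodicity/Borel--Cantelli step by it.

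The real gap is in part (ii), and it is a gap you yourself identify: after setting up a coupling you observe that ``any such identity controls the $p_2$-partition function only in conditional expectation, whereas the $\limsup$ defining $\tf(p)$ is an almost-sure object,'' and you conclude that one would need a concentration estimate and the identification $\tf(p)-\log p=\lim_N\frac1N\bbE_p[\log W_N]$. But Remark~\ref{laremkitu} of the paper explicitly says that this self-averaging is an open problem --- it is precisely what the author could \emph{not} prove. So your route is blocked. The reason you run into this wall is that you condition on the sparser environment $X^{p_1}$ and estimate $\bbE[Z_N^{p_2}\mid X^{p_1}]$; this gives a bound in the wrong direction. The paper conditions the other way: using the monotone uniform coupling, for $p'\le p$ one has the exact identity $\bbE[W_N(p')\mid\mathcal F_p]=W_N(p)$ (equation \eqref{voiture}), because conditionally on the $p$-environment each length-$N$ open path survives the thinning to the $p'$-environment with probability $(p'/p)^N$, and the normalization $p'^{-N}$ exactly cancels. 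From this, Markov's inequality gives $\bbP\left(W_N(p')>e^{N\varepsilon/2}W_N(p)\mid \mathcal F_p\right)\le e^{-N\varepsilon/2}$, Borel--Cantelli gives that eventually $\log W_N(p')\le \log W_N(p)+N\varepsilon/2$ almost surely, and combining with $\limsup_N\frac1N\log W_N(p)=\tf(p)-\log p$ yields $\tf(p')-\log p'\le\tf(p)-\log p+\varepsilon$. No concentration, no fractional moments, no subadditivity identification is needed --- only that the conditional expectation goes the \emph{right} way so that Markov gives a one-sided tail bound on the quantity you want to bound from above.
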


\begin{remark}\rm \label{laremkitu}
 It is rather intuitive that some kind of self-averaging should occur and that whenever percolation occurs, one should have
\begin{equation}\label{yte}
 \tf(p)= \lim_{N\to \infty} \frac 1 N \log Z_N. 
\end{equation}
Thus $e^{\tf(p)}$ should be understood as some sort of quenched connective constant for the oriented percolation network.
However we could not prove this statement, nor find a proof of it in the literature. This is the reason why 
the less natural $\limsup$ appears in the definition. 
\end{remark}

\begin{proposition}\label{martin}
The random sequence $(W_N)_{N\ge 0}$  is a positive martingale with respect to the natural filtration 
$(\mathcal F_N)_{N\ge 0}$ defined by
\begin{equation}\label{defFN}
\mathcal F_N:=\sigma(X_{(n,x),(n+1,y)}, x,y\in \bbZ^d, 0\le n\le N-1 ). 
\end{equation}
Thus the limit 
\begin{equation}\label{winf}
 W_\infty:=\lim_{N\to\infty} W_N.
\end{equation}
exists almost surely.
 Condition on $\mathcal P$, $W_\infty$ satisfies the following zero-one law.
 \begin{equation}
 \bbP_p\left( W_\infty>0 \ | \ \mathcal P\right)\in \{0,1\}.
\end{equation}
Moreover the function 
\begin{equation} \label{decres2}
  p\mapsto \bbP_p\left( W_\infty>0\right)
\end{equation}
is non-decreasing so that
\begin{equation}
\label{pc3}
 p_c^{(3)}:=\inf\{p\in[0,p_{\max}] | \ \bbP_p(W_{\infty}>0)>0\}=\sup\{p\in[0,p_{\max}] \ | \ \bbP_p(W_{\infty}>0)=0\},
\end{equation}
is well defined  whenever $\{p\ | \ \bbP_p(W_{\infty}>0)\}$ is non-empty.
\end{proposition}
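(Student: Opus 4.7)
The plan addresses the four successive claims in order. \textbf{Martingale property and convergence.} Decomposing $Z_{N+1}$ along the time-$N$ prefix gives
\begin{equation*}
Z_{N+1}=\sum_{\substack{S:\ |S|=N\\ S\ \text{open}}}\ \sum_{y\in \bbZ^d} X_{(N,S_N),(N+1,y)}.
\end{equation*}
The edges starting at time $N$ are independent of $\mathcal F_N$, each with mean $pf(y-S_N)$, and $\sum_y f(y-S_N)=1$; hence $\bbE_p[Z_{N+1}\mid\mathcal F_N]=pZ_N$, i.e.\ $\bbE_p[W_{N+1}\mid\mathcal F_N]=W_N$. Since $W_N\geq 0$ and $\bbE[W_N]=1$, the martingale converges almost surely to a finite limit $W_\infty$.

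\textbf{Zero-one law.} I plan to combine L\'evy's $0$-$1$ law with translation invariance. Let $r:=\bbP_p(W_\infty>0)$; for each $(N,y)$ introduce the shifted limit $W_\infty^{(N,y)}:=\lim_K p^{-K}Z_K^{(N,y)}$, where $Z_K^{(N,y)}$ counts open paths of length $K$ starting at $(N,y)$. Two features drive the argument: $W_\infty^{(N,y)}\eqlaw W_\infty$ by space-time translation invariance of the environment, and $W_\infty^{(N,y)}$ is independent of $\mathcal F_N$ (it depends only on edges with time index $\geq N$). Concatenating an open path from $(0,0)$ to $(N,y)$ with a continuing $K$-path yields the pointwise lower bound $W_\infty\geq p^{-N} Z_N^{(0,y)}\,W_\infty^{(N,y)}$. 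Since $f$ has finite support, K\"onig's lemma identifies $\mathcal P$ (up to a null set) with $\{A_N\neq\emptyset\ \text{for all } N\}$ where $A_N:=\{y:Z_N^{(0,y)}>0\}$. On $\mathcal P$, picking any $\mathcal F_N$-measurable $y^\ast\in A_N$ and invoking conditional independence gives $\bbP_p(W_\infty>0\mid\mathcal F_N)\geq\bbP_p(W_\infty^{(N,y^\ast)}>0\mid\mathcal F_N)=r$. L\'evy's theorem then produces $\ind_{\{W_\infty>0\}}\geq r$ almost surely on $\mathcal P$, which when $r>0$ forces $W_\infty>0$ almost surely on $\mathcal P$, yielding the announced dichotomy.

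\textbf{Monotonicity.} For $0\leq p<p'\leq p_{\max}$, I use the Bernoulli thinning coupling $X_e^{(p)}:=X_e^{(p')}\eta_e$ with $(\eta_e)$ i.i.d.\ Bernoulli$(p/p')$, independent of $X^{(p')}$; this produces the correct marginal $pf(e)$. Since $\eta$ is independent of $X^{(p')}$, for any path $S$ of length $N$,
\begin{equation*}
\bbE\bigl[\textstyle\prod_{e\in S}X_e^{(p)}\bigm|\sigma(X^{(p')})\bigr]=(p/p')^N\,\textstyle\prod_{e\in S}X_e^{(p')},
\end{equation*}
which summed over $S$ and normalised gives $\bbE[W_N^{(p)}\mid\sigma(X^{(p')})]=W_N^{(p')}$. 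Conditional Fatou then yields $\bbE[W_\infty^{(p)}\mid\sigma(X^{(p')})]\leq W_\infty^{(p')}$ a.s., so $\bbE_p[W_\infty]\leq\bbE_{p'}[W_\infty]$. Since $W_\infty\geq 0$, positivity of $\bbP_p(W_\infty>0)$ is equivalent to positivity of $\bbE_p[W_\infty]$, so $\{p:\bbP_p(W_\infty>0)>0\}$ is an up-set; combined with the dichotomy from the previous step and classical monotonicity of $p\mapsto\bbP_p(\mathcal P)$, this delivers the non-decreasing character of $p\mapsto\bbP_p(W_\infty>0)$ and well-definedness of $p_c^{(3)}$.

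The delicate step will be the zero-one law: the uniform lower bound $\bbP_p(W_\infty>0\mid\mathcal F_N)\geq r$ on $\mathcal P$ hinges on the interplay between translation invariance of the environment and $\mathcal F_N$-independence of the shifted limits $W_\infty^{(N,y)}$, and it is this bound, amplified by L\'evy's theorem, that upgrades ``positive probability'' into ``almost sure on $\mathcal P$''. The remaining steps are essentially computational once the right couplings and decompositions are set up.
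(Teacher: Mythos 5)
Your proposal is correct and follows essentially the same route as the paper: the martingale property by decomposing over the last step, the zero-one law by conditioning on $\mathcal F_N$ and using translation invariance of the shifted limits together with L\'evy's theorem, and monotonicity by a thinning coupling plus conditional Fatou. The only cosmetic differences are that the paper bounds $\bbP_p(W_\infty>0\mid\mathcal F_N)\ge\kappa\ind_{\{Z_N>0\}}$ directly from the identity $W_\infty=\sum_x p^{-N}Z_N(x)\,\theta_{N,x}W_\infty$ (avoiding the measurable selection of $y^\ast$ and the appeal to K\"onig's lemma), and that in the monotonicity step the conditional Fatou inequality $\bbE[W_\infty^{(p)}\mid\sigma(X^{(p')})]\le W_\infty^{(p')}$ already yields $\{W_\infty^{(p')}=0\}\subset\{W_\infty^{(p)}=0\}$ a.s.\ and hence full monotonicity of $p\mapsto\bbP_p(W_\infty>0)$ without the detour through expectations and the dichotomy.
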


\begin{remark}\rm
For notational convenience what can set  $p_c^{(2)}=p_{\max}$ (resp. $ p_c^{(3)}=p_{\max}$) when \eqref{pc2} (resp.
\eqref{pc3}) does not give a definition. This is not of crucial importance, since $\{p\ | \ \bbP_p(W_{\infty}>0)\}$ is non-empty 
in all the cases for which we present results.
\end{remark}

With these definition, we trivially have
\begin{equation}
 p_c\le p_c^{(2)}\le p^{(3)}_c\le p_{\max},
\end{equation}
and one wants to investigate if some of these inequality are sharps.

\begin{remark}\rm \label{loc}
Whether $W_N$ tends to a positive limit or tends to zero also has an interpretation in terms of paths localization:
when $p> p_c^{(3)}$ an open path of length $N$ should essentially look like a typical simple random walk path, and, once rescaled,
converge in law to Brownian Motion (to make this a rigorous statement, one needs to adapt the proof of Comets and Yoshida 
\cite{cf:CY}).
On the contrary, when $p<p_c^{(2)}$, trajectories should behave in a totally different way, having different a scaling 
exponent and exhibiting 
localization (in a sense that two independent trajectories chosen at random should intersect many times), 
see \cite{cf:Y2} for a rigorous 
statement about localization for oriented percolation.
\end{remark}

\subsection{Known results from the literature and formulation of the problem}

The idea of investigating $\tf(p)$ appears in the work of Darling \cite{cf:Darling} where a list of open-questions 
is present.

\medskip

A first question is to know what is the condition for having a phase in which $p=\tf(p)$ (to have $p_c^{(2)}<p_{\max}$).
This question has been-fully answered for the nearest-neighbor model and it turns out that this occurs only in dimension $3$ and higher 
(on which we focus our attention)

\begin{equation}
 p_c<p_c^{(2)}=p_c^{(3)}=p_{\max} \quad \Leftrightarrow d=1 \text{ or } 2.
\end{equation}

Yoshida \cite{cf:Y} gave the answer for the case $d=1$ (for a more general model, called Linear Stochastic Evolution) 
adapting a result of Comets and Vargas for directed polymers 
\cite{cf:CV} and also proved $p_c^{(3)}=p_{\max}$ for $d=2$, also using methods related to directed polymers.
Bertin \cite{cf:Bertin} completed the picture by proving $p_c^{(2)}=p_{\max}$ for $d=2$ 
(adapting a result proved for directed polymers in \cite{cf:Lac}).
Note also that these result may be extended into more quantitative one by using the same methods as in \cite{cf:Lac}.
We do not give the proof of these statement as the proof is not short and exactly similar to what is done in \cite{cf:Lac,cf:Bertin2,cf:Lac2}
for directed polymers in discrete a setup, Poissonian environment, or Brownian environment, 
but we think that they are worth being mentioned:

\begin{proposition}\label{fromage}
We have the following quantitative estimate on $\tf(p)$ for $d=1$ and $d=2$
\begin{itemize}
 \item [(i)]
For the nearest neighbor-model with $d=1$, there exists a constant $c$ such that for all $p\in [1,2]$
(recall that $p_{\max}=2$ in that case)
\begin{equation}\label{d1}
\frac{1}{c}(p-p_{\max})^2  \le  \log p -\tf(p)\le  c(p-p_{\max})^2 |\log (p_{\max}-p)|. 
\end{equation}
\item[(ii)]For the nearest neighbor-model with $d=2$ there constants $c$ and $\gep$ such that for all  
 $p\in [4-\gep,4]$ (recall that $p_{\max}=4$ in that case)
\begin{equation}\label{d2}
\exp\left( -\frac c{(p_{\max}-p)^2} \right)  \le  \log p -\tf(p)\le  \exp\left(-\frac{1}{c(p_{\max}-p}\right). 
\end{equation}
\end{itemize}
\end{proposition}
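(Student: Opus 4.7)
The plan is to adapt, essentially line by line, the strategy used in \cite{cf:Lac,cf:Bertin2,cf:Lac2} to prove the matching quantitative bounds on the free-energy gap for directed polymers in dimensions $1$ and $2$. The observation that makes this possible is that the positive martingale $W_N = p^{-N}Z_N$ plays exactly the role of the normalized polymer partition function, the oriented random walk with step distribution $f$ (simple random walk on $\bbZ$ or $\bbZ^2$ in the two cases of interest) plays the role of the underlying polymer measure, and the gap $\log p - \tf(p)$ is the analog of the gap between annealed and quenched free energy.

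For the lower bounds on $\log p-\tf(p)$, I would use the fractional-moment method. Fix $\theta\in(0,1)$; the goal is to produce, for a well-chosen block length $\ell$, a one-block inequality of the form $\bbE_p[W_\ell^\theta]\leq 1-\delta_\ell$ with $\delta_\ell$ quantifiable. Using the Markov structure of $(W_n)$ to iterate over $N/\ell$ disjoint blocks gives $\bbE_p[W_N^\theta]\leq(1-\delta_\ell)^{N/\ell}$, hence $\log p-\tf(p)\geq \delta_\ell/(\ell\theta)$. The one-block bound comes from a size-biased change of measure $\tilde{\bbP}$ in which a family of Bernoulli parameters near the spatial origin of the block is multiplied by $1-h$; Hölder's inequality then gives
\begin{equation}
\bbE_p[W_\ell^\theta] \leq \bigl(\bbE_p\bigl[(d\bbP_p/d\tilde{\bbP})^{\theta/(1-\theta)}\bigr]\bigr)^{1-\theta}\bigl(\tilde{\bbE}\,W_\ell\bigr)^{\theta}.
\end{equation}
The tilt cost behaves like $\exp(ch^2\cdot|\mathrm{supp}|)$ while $\tilde{\bbE}\,W_\ell$ is controlled by the expected time spent by the walk in the tilted region, which scales like $\sqrt{\ell}$ in $d=1$ and $\log \ell$ in $d=2$. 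Optimizing in $h$ and $\ell$ as functions of $\gep=p_{\max}-p$ produces $\ell\sim \gep^{-2}$ and $\delta_\ell\sim\gep^2$ in $d=1$, and $\ell\sim\exp(c/\gep^2)$ in $d=2$, yielding the announced rates.

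For the matching upper bounds on $\log p-\tf(p)$, I would run a truncated second-moment argument inside a single block of size $\ell$, taken at the threshold where the second moment of $W_\ell$ restricted to walks staying inside a moderate transversal window is still bounded. Paley--Zygmund then gives $W_\ell\geq c$ with probability bounded below, and a standard coarse-graining / renewal concatenation over independent blocks builds explicit families of open paths whose length-$N$ partition function is at least $p^N c^{N/\ell}$, hence $\tf(p)\geq\log p-C/\ell$. Taking $\ell\sim\gep^{-2}|\log\gep|$ in $d=1$ (the extra $|\log\gep|$ absorbing the transversal truncation) and $\ell\sim\exp(1/(c\gep))$ in $d=2$ matches the lower bounds up to the stated constants.

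The main obstacle will be to check that the intersection-local-time estimates for the random walk with step distribution $f$ reproduce those of the simple random walk with the correct leading constants, particularly in dimension two, where the logarithmic behaviour of the self-intersection local time has to be resolved finely enough for the choice of tilt $h$ and block length $\ell$ to yield the exponential rate in \eqref{d2}. Matching the inner constants in the two exponentials of \eqref{d2} is the most delicate point and parallels the corresponding difficulty in \cite{cf:Lac2}.
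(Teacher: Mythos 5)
The paper does not actually supply a proof of Proposition~\ref{fromage}: the author explicitly states that the proof is ``exactly similar'' to the directed-polymer arguments of \cite{cf:Lac,cf:Bertin2,cf:Lac2} and omits it. Your outline---fractional moments via a size-biased change of measure and block coarse-graining for the lower bound on $\log p-\tf(p)$, and a truncated second-moment estimate with Paley--Zygmund and concatenation over blocks for the upper bound, with the $d=1$ versus $d=2$ scaling governed by the intersection local time of the underlying walk---is precisely the strategy of those cited references, so your proposal coincides with the paper's intended route.
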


\begin{rem} \rm
 In analogy with what one has for directed polymer the lower bound in \eqref{d1} should give the right order for $\log p -\tf(p)$
whereas the upper-bound is supposed to be the true asymptotic in \eqref{d2}. More justification on 
these conjecture is available in \cite{cf:Lac}.
\end{rem}

In dimension $d\ge 3$, one can to get an upper-bound on $p_c^{(3)}$
by using second-moment method and the fact that $d$-dimensional random walks are transient. 
This was first noticed by Kesten, who was only interested in that to get an upper-bound on the percolation threshold $p_c$,
his proof appears in \cite{cf:CD} for a different oriented percolation model.

\medskip

We present briefly his argument below adapted to our setup:
If  $\sup_{N\ge 0} \bbE_p\left[W_N^2\right]<\infty$, then $W_N$ is uniformly integrable so that
$\bbE_p\left[W_{\infty}\right]=1$ and thus $\bbP_p(W_{\infty}>0)>0$.

\medskip

To compute the second moment one considers $(T_n)_{n\ge 0}$ the random walk on $\bbZ$ starting from zero, 
with IID increments whose law $\bP$ is given by
\begin{equation}\label{ouflemec}
 \bP(T_1=x)=f(x).
\end{equation}
Then 
\begin{equation}\label{secmom}
\bbE_p[W_N^2]=\bE^{\otimes 2}\left[\prod_{\{ n \in[0,N-1] \ | \ T_n^{(1)}=T_n^{(2)} \text{ and } T_{n+1}^{(1)}=T_{n+1}^{(2)}\}}(f(T^{(1)}_{n+1}-T_n^{(1)})p)^{-1} \right].
\end{equation}
where $\bE^{\otimes 2}$ denote expectation with respect to two independent copies of $T$, so that
\begin{equation}\label{condl2}
\lim_{N\to \infty}\bbE_p[W_N^2]<\infty   
\Leftrightarrow
 p\ge \frac{1}{1-\bP^{\otimes 2}\left[S_1^{(1)}\ne S_1^{(2)}\ ; \ \exists n\ge 2,\  T_n^{(1)}=T_n^{(2)} \right]}.
\end{equation}
It implies in particular that $p_c^{(3)}<p_c$ when $f$ is uniformly distributed over 
a set (e.g.\ for both cases given in \eqref{example}).

\medskip
 
For percolation on the $d+1$-regular tree, it is known that $p_c=p_c^{(2)}=p_c^{(3)}$ and this can easily be achieved by performing the two first moment 
of the number of paths.
(see \cite{cf:KS} for the best known general result with that flavor on Galton-Watson trees).

\medskip

We can then precise our question and ask ourselves: when $d\ge 3$ do we have in general $p_c^{(2)}>p_c$, 
which means, do we have an intermediate phase where percolation occurs, but with much less paths than expected, 
and this also on the exponential scale.
We give a (positive) answer to this question in the case of spread-out percolation, in the limit where the range $L$ 
is sufficiently large. The reason why we cannot give a full answer for other cases is our lack of knowledge on the value 
of the percolation threshold
$p_c$, but the method we develop here gives also some results for large $d$ in the 
nearest-neighbor model and for the spread-out model 
with $d=3$ and $4$.
We discuss the case of nearest-neighbor case with $d\ge 3$ small  later in this introduction.

\medskip

The question of of studying the growth rate $\tf(p)$ appears in a work of Darling \cite{cf:Darling}, but it seems that 
it has then been left aside for many years.
In \cite{cf:CPV} a similar problem is raised but concerning the number of path with a density $\rho$ of open edges.
More recently \cite{cf:FukY}, Fukushima and Yoshida proved that $\tf(p)>1$ when $\mathcal P$ occurs with positive probability in the generalized setup of 
Linear Stochastic Evolution.

\subsection{Results}

Our main result in this paper is an asymptotic lower-bound for $p_c^{(2)}$ for both:
\begin{itemize}
 \item [(i)]  The high-dimensional nearest neighbor model (first example in \eqref{example}), for large $d$.
 \item [(ii)] A spread-out model that generalizes the second example in \eqref{example} and that we describe below.
\end{itemize}

Consider $F$ being a continuous function $\bbR^d\to \bbR^+$ 
with compact support which is invariant under the reflections 
$(x_1,\cdots, x_i, \cdots, x_d)\mapsto (x_1,\cdots, -x_i, \cdots, x_d)$ and such that 
$\int F(x)\dd x=1$, and (large) $L\in \bbN$ and set 

\begin{equation}\label{mainmodel}
 f_L(x):=\frac{F(x/L)}{\sum_{y\in \bbZ^d} F(y/L)}.
\end{equation}

\begin{theorem}\label{mainres}
For  the nearest neighbor-model one has, asymptotically when $d\to \infty$ 

\begin{equation}
p_c^{(2)}(d)\ge 1+\frac{\log 2}{2d^2}+o(d^{-2}).
\end{equation}

For the generalized spread-out model one has, for every $d\ge 3$, when $L \to \infty$

\begin{equation}
 p_c^{(2)}(d,L)\ge  1+\log 2\sum_{k=2}^\infty f_L^{\ast 2k}(0)+ O(L^{-3d/2}).  
\end{equation}
where $\ast$ stands for discrete  convolution.
\end{theorem}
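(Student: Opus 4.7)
Plan: The proof goes via the \emph{fractional moment method} with exponent $\theta = 1/2$, combined with a change of measure on the environment.

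\emph{Step 1 (Submultiplicativity).} Denote by $Z_N(0,x)$ the number of open paths of length $N$ from $(0,0)$ to $(N,x)$ and set
\begin{equation*}
A_N := p^{-N/2}\sum_{x\in \bbZ^d}\bbE_p\bigl[Z_N(0,x)^{1/2}\bigr].
\end{equation*}
Using the decomposition $Z_{N+M}(0,y) = \sum_x Z_N(0,x)\,\hat Z_M(x,y)$ with the two factors independent (disjoint time slabs), the subadditivity $(a+b)^{1/2} \le a^{1/2}+b^{1/2}$, and translation invariance in $x$, one checks $A_{N+M}\le A_N\,A_M$. Since moreover $\bbE_p[W_N^{1/2}]\le A_N$ by the same subadditivity, producing a single $N_0=N_0(L,d)$ with $A_{N_0}<1$ gives exponential decay of $\bbE[W_N^{1/2}]$; Markov's inequality combined with Borel--Cantelli then yields $\mathcal X<\log p$ a.s.\ on $\mathcal P$, hence $\tf(p)<\log p$ and $p<p_c^{(2)}$.

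\emph{Step 2 (Change of measure and Cauchy--Schwarz).} Fix such an $N_0$ and a deterministic edge-set $\mathcal E$ in the time-slab $[0,N_0]$; introduce the tilted law $\tilde\bbP$ under which each edge $e\in\mathcal E$ is opened independently with probability $\bar p(e)/2$ instead of $\bar p(e)$ (edges outside $\mathcal E$ keeping their original law). With $g:=d\bbP/d\tilde\bbP$, Cauchy--Schwarz gives, for each $x$,
\begin{equation*}
\bbE[Z_{N_0}(0,x)^{1/2}] \le \bbE[g]^{1/2}\,\bbE_{\tilde\bbP}[Z_{N_0}(0,x)]^{1/2},
\end{equation*}
while a direct per-edge computation yields
\begin{equation*}
\bbE[g] = \prod_{e\in\mathcal E}\Bigl(1+\tfrac{\bar p(e)}{2}+O(\bar p(e)^2)\Bigr), \qquad \bbE_{\tilde\bbP}[Z_{N_0}(0,x)] = p^{N_0}\,\bE\bigl[\ind_{S_{N_0}=x}\,2^{-|S\cap\mathcal E|}\bigr],
\end{equation*}
with $\bE$ the random-walk expectation with step $f$. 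A second Cauchy--Schwarz on the spatial sum then gives
\begin{equation*}
A_{N_0} \le \bbE[g]^{1/2}\,|\mathrm{supp}(f^{*N_0})|^{1/2}\,\bE\bigl[2^{-|S\cap\mathcal E|}\bigr]^{1/2}.
\end{equation*}

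\emph{Step 3 (Choice of $\mathcal E$ and conclusion).} The heart of the argument is the design of $\mathcal E$ so that $|S\cap\mathcal E|$ concentrates, for typical $S$, around $\bigl(\sum_{k\ge 2} f_L^{*2k}(0)\bigr)N_0$ -- the expected ``non-trivial'' intersection number of two independent copies of the walk. A natural choice is the family of edges emanating from the longitudinal axis $\{(n,0):0\le n<N_0\}$, so that $|S\cap\mathcal E|$ is essentially the number of returns of $S$ to the spatial origin (weighted by the outgoing step). Sharp sub-Gaussian concentration of this linear statistic then yields
\begin{equation*}
-\log\bE\bigl[2^{-|S\cap\mathcal E|}\bigr]\;\sim\;(\log 2)\,\bE|S\cap\mathcal E|,
\end{equation*}
while $\log\bbE[g]\sim\tfrac 12\sum_{e\in\mathcal E}\bar p(e)$. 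Balancing these two exponents (after isolating the trivial $k=0,1$ self-energy terms that cancel between cost and gain) yields $A_{N_0}<1$ precisely when $p-1<(\log 2)\sum_{k\ge 2}f^{*2k}_L(0)+O(L^{-3d/2})$. The same scheme, applied to the nearest-neighbor step in high dimension with the dominant return contribution $f^{*4}(0)$, produces the asymptotic $\log 2/(2d^2)$.

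The main obstacle is the simultaneous optimization in Step 3: $\mathcal E$ must be large enough to force sharp concentration of $|S\cap\mathcal E|$ and to reproduce \emph{exactly} the intersection constant $\sum_{k\ge 2}f^{*2k}_L(0)$, yet small enough that the entropy cost $\bbE[g]^{1/2}$ does not overwhelm the gain; moreover $N_0$ must be chosen so as to absorb the polynomial prefactor $|\mathrm{supp}(f^{*N_0})|^{1/2}$ into the leading exponential without damaging the $\log 2$ constant. The error $O(L^{-3d/2})$ in the theorem encodes both the quadratic $O(\bar p(e)^2)$ corrections in the per-edge Radon--Nikodym factor and the Gaussian tails of the concentration argument.
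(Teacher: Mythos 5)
Your proposal is a genuinely different route from the paper's — a fractional-moment/change-of-measure argument rather than the paper's size-biasing. The paper works with Birkner's spine construction: it samples a reference walk $T$, forces the edges of $T$ to be open, defines $\bar Z_N$ as the number of open paths from $(0,0)$ to $(N,T_N)$ in that tilted environment, and reduces (Proposition \ref{salutlesamis}) to the finite-volume criterion $\bbE_p\bE \log \bar Z_{N_0} > N_0\log p$. The constant then comes out of the trivial bound $\bbE\bE[\log\bar Z_{N_0}]\ge(\log 2)\bbP(\bar Z_{N_0}\ge 2)$ and a combinatorial estimate of the probability that at least one ``bridge'' over $T$ is open (second-moment/inclusion--exclusion for the spread-out case, length-$2$ bridges for the nearest-neighbor case). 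Your Steps 1 and 2 (submultiplicativity of $A_N$; Cauchy--Schwarz after a change of measure) are standard and correct, but they play a different role than the paper's reduction.

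However, Step 3 contains a genuine gap — indeed two. First, the cost--benefit balance fails for the proposed $\mathcal{E}$: tilting \emph{all} edges emanating from the axis $\{(n,0)\}_{0\le n<N_0}$ gives $\sum_{e\in\mathcal{E}}\bar p(e)\approx p N_0$, i.e.\ a cost of order $1$ per time step, whereas the gain is controlled by $\bE|S\cap\mathcal{E}|$, which is the expected number of returns of $S$ to the spatial origin, of order $L^{-d}$ per time step for the spread-out model. The entropy cost $\bbE[g]^{1/2}$ therefore dwarfs the gain by a factor of order $L^{d}$, and one does not obtain $A_{N_0}<1$. Second, even granting a more economical $\mathcal{E}$, the asserted concentration
$-\log\bE\bigl[2^{-|S\cap\mathcal{E}|}\bigr]\sim(\log 2)\,\bE|S\cap\mathcal{E}|$
is not correct: the number of returns is essentially Poisson (or even just $\{0,1\}$-valued when $N_0\ll L^{d}$), so that $\bE[2^{-X}]\approx\exp\bigl(-\tfrac 12\bE X\bigr)$, giving the constant $\tfrac 12$ rather than $\log 2\approx 0.693$. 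The $\log 2$ in the theorem has nothing to do with halving the Bernoulli parameters; it comes from the crude bound $\log\bar Z_{N_0}\ge\log 2$ on the event $\{\bar Z_{N_0}\ge 2\}$. As a consequence, your claim that the balance yields the threshold ``precisely when $p-1<(\log 2)\sum_{k\ge 2}f^{*2k}_L(0)+O(L^{-3d/2})$'' is not justified, and indeed the fractional-moment/change-of-measure scheme, while it can prove $p_c^{(2)}>1$, does not naturally reproduce the sharp second-order constant that the bridge-counting argument produces. You would need to rebuild Step 3 around a reference path (a spine), at which point you would essentially be redoing the paper's argument.
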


For the spread-out model in dimension $d> 4$ the above results gives a positive answer to
the question raised earlier concerning the existence of an intermediate phase : 
\begin{itemize}
\item The bound we have on $p_c^{(2)}$ can also read
\begin{equation}
 p_c^{(2)}\ge  1+\frac{\log 2}{L^d}\sum_{k=2}^\infty F^{\ast 2k}(0)+ o(L^{-d}).
\end{equation}
\item In \cite{cf:VdHS} lace expansion has been used to prove asymptotic in $L$ of $p_c$ 
for the spread-out model for $d>4$.
Their result (Theorem 1.1) implies
\begin{equation}\label{weshwesh}
  p_c^{(2)}=  1+\frac{1}{2L^d}\sum_{k=2}^\infty F^{\ast 2k}(0)+ o(L^{-d}). 
\end{equation}
\end{itemize}
Thus we have
\begin{corollary}
For any $d>4$, for all $L$ large enough one has
\begin{equation}
 p_c^{(2)}(d,L)>p_c(d,L).
\end{equation}
\end{corollary}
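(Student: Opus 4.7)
The proof of the corollary is, modulo matching conventions, an immediate comparison of two asymptotic expansions. My plan is to combine the lower bound on $p_c^{(2)}(d,L)$ supplied by Theorem \ref{mainres} with the matching sharp asymptotic on $p_c(d,L)$ obtained via lace expansion by van der Hofstad and Slade, and observe that the leading-order coefficients differ by the strictly positive constant $\log 2 - 1/2$.

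First I would rewrite the second estimate of Theorem \ref{mainres} in terms of the continuous profile $F$ rather than of $f_L$. A Riemann-sum approximation gives $\sum_{y\in\bbZ^d} F(y/L) = L^d (1+o(1))$, and hence $f_L^{\ast 2k}(0) = L^{-d} F^{\ast 2k}(0)(1+o(1))$, where $F^{\ast 2k}$ denotes the iterated continuous convolution of $F$ evaluated at the origin. Substituted into Theorem \ref{mainres} this yields
\begin{equation*}
p_c^{(2)}(d,L) \;\ge\; 1 + \frac{\log 2}{L^d}\sum_{k=2}^\infty F^{\ast 2k}(0) + o(L^{-d}).
\end{equation*}

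Next I would invoke the lace-expansion result of \cite{cf:VdHS}, valid precisely because $d>4$, which is recalled in \eqref{weshwesh} and which provides the matching sharp asymptotic for the percolation threshold itself,
\begin{equation*}
p_c(d,L) \;=\; 1 + \frac{1}{2L^d}\sum_{k=2}^\infty F^{\ast 2k}(0) + o(L^{-d}).
\end{equation*}
Subtracting these two estimates gives
\begin{equation*}
p_c^{(2)}(d,L)-p_c(d,L) \;\ge\; \frac{\log 2 - 1/2}{L^d}\sum_{k=2}^\infty F^{\ast 2k}(0) + o(L^{-d}).
\end{equation*}
To conclude I only need to check that $\sum_{k\ge 2} F^{\ast 2k}(0)$ is both strictly positive and finite: positivity is trivial since $F^{\ast 4}(0)>0$, and finiteness follows from the local central limit theorem for the random walk with step density $F$, giving $F^{\ast 2k}(0) = O(k^{-d/2})$, which is summable as soon as $d\ge 3$ (a fortiori for $d>4$). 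Since $\log 2 > 1/2$, the right-hand side above is a strictly positive multiple of $L^{-d}$ up to lower-order terms, so $p_c^{(2)}(d,L) > p_c(d,L)$ for every $L$ sufficiently large.

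The proof of the corollary itself contains no genuine obstacle; the only point requiring care is to make sure the two asymptotic expansions are written in a common normalization, so that the quantities $F^{\ast 2k}(0)$ appearing in both formulas are \emph{literally identical}. All the difficulty has already been packaged into Theorem \ref{mainres} on one side and into the lace-expansion machinery of \cite{cf:VdHS} on the other; once these are aligned, the numerical inequality $\log 2 > 1/2$ does all the work.
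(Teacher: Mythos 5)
Your proof is correct and follows essentially the same route as the paper: the corollary is presented there as an immediate consequence of the two bullet points preceding it, namely the rewriting of the Theorem \ref{mainres} bound as $p_c^{(2)}\ge 1+\frac{\log 2}{L^d}\sum_{k\ge 2}F^{\ast 2k}(0)+o(L^{-d})$ and the van der Hofstad--Sakai asymptotic (which you correctly read as an asymptotic for $p_c$, despite the typo $p_c^{(2)}$ in \eqref{weshwesh}), with the inequality $\log 2>1/2$ finishing the job. You simply spell out a few details the paper leaves implicit (the Riemann-sum comparison between $f_L^{\ast 2k}(0)$ and $L^{-d}F^{\ast 2k}(0)$, and the local-CLT bound $F^{\ast 2k}(0)=O(k^{-d/2})$ giving summability).
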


In \cite{cf:VdHS}, it is also mentioned that \eqref{weshwesh} should still hold for $d=3,4$ (and thus also the above Corollary). 
Indeed, the analogous of \eqref{weshwesh}
has been proved to hold for the 
contact process in \cite{cf:DP}, and to many respect this model is very similar to oriented percolation.

\begin{remark}\rm
In a work of 
Blease \cite{cf:Bl1}, a heuristic power-expansion of $p_c$ as a function of $1/d$
 is given for a different oriented percolation model closely related to this one. Adapted to our setup, it tells us that
the conjectured asymptotic for $p_c$ is
is 
\begin{equation}\label{coonj}
 p_c= 1+\frac{1}{4d^2}+O(\frac{1}{d^3}).
\end{equation}
Asymptotic for $p_c$ to even higher order have been obtained rigorously for (unoriented) nearest neighbor percolation 
(see \cite{cf:SVdH}), using 
lace-expansion techniques, and it is quite reasonable to think that a similar work for this model 
(which to many respect is simpler to handle than usual percolation) would turn \eqref{coonj} into a rigorous statement, 
and yield
$p_c^{(2)}> p_c$ in high-enough dimension.
\end{remark}
\medskip

\begin{remark}\rm
We believe that $p_c^{(2)}>p_c$ in every lattice model of oriented percolation, when the transversal dimension $d$ is larger 
than $2$.
This conjecture is supported by the fact that we are able to prove the result 
for the spread out model for any profile function $F$.
However there are several obstacles to prove this for the nearest-neighbor model when $d$ is not really large (e.g.\ $d=3$ or $4$). 
We develop this point in the open-question section.
\end{remark}

\subsection{Open questions}

Finally we present open questions or possible direction for research:

\subsubsection{Concerning oriented percolation}:
\begin{itemize}
\item Equation \eqref{condl2} gives an upper-bound for $p_c^{(3)}$,
which for large $L$ gives, (in the spread-out case)
\begin{equation}
 p_c^{(3)}\le 1+\sum_{k=2}^\infty f_L^{\ast 2k}(0)+ O(L^{-2d}).
\end{equation}
This makes us wonder what is the correct asymptotic behavior of $p_c^{(3)}$ at the second order when $L$ goes to infinity.

\item In analogy with a conjecture for directed polymer, it is natural that one should have 
$p_c^{(2)}=p_c^{(3)}$ in general, which means that when $W_N$ decays to zero, it does so exponentially fast, except
maybe at the critical point.
Whether $W_{\infty}=0$ at $p_c^{(3)}$ is more difficult to conjecture and this may depend on the dimension.

\item  Proposition \ref{salutlesamis}, gives a way to get a lower bound on $p_c^{(2)}$ that is quite general
(if $\bbE_p \log \bar Z_{N}> N \log p$, then $p\ge p_c^{(2)}$). This bound should get quite acute when $N$ takes large value.
However $\bbE_p \log \bar Z_{N}$ is quite heavy to compute by brute force as it involves $O(N^d)$ Bernoulli variables.
An interesting perspective would be get
method to compute the above 
expectation in an effective way (e.g. combining computer calculus and concentration-like theoretical results) 
and and see how the obtained bound compares with conjectured values for $p_c$ (\cite[Table |B.1]{cf:SD} ).
To get an answer to the question ``$p_c^{(2)}>p_c^{(1)}$ ?'' one also have to find an good upper-bound on $p_c^{(1)}$.
Indeed the only rigorous upper-bound for $p_c^{(1)}$ we have seen in the literature is the one from \cite{cf:CD} and it is also 
an upper-bound for $p_c^{(3)}$ (and thus can be of no use for our purpose).

\end{itemize}

\subsubsection{Concerning percolation}:
In analogy with what we do here for oriented-percolation, it is  a quite natural question to study the
{\sl quenched} connective constant of percolated lattices, that is to say: the asymptotic growth-rate (with $N$) 
of the number of edge 
(or site) self-avoiding paths of length $N$
starting from a given point of the graph. In particular one would like to understand how its compares with
the growths rate of its expected value
(the {\sl annealed} connective constant which is trivially equal to $p\nu$ where $\nu$ is the connective constant 
for the original lattice). 
The problem seems considerably more difficult that in the directed case, as 
self-avoiding walks are involved instead of directed walks 
(see the monograph of Madras and Slade for a rather complete account on Self Avoiding Walk.
\cite{cf:MSlade}). This issue has been studied in the physics literature 
(see e.g.\ \cite{cf:BKC} and reference therein although we would not quite agree with the conjecture that 
are present there, see below), but to our knowledge, 
no rigorous result has been established so far on the mathematical side:

\begin{itemize}
\item To begin with it would be nice to prove the existence of the quenched connectivity constant 
(something we were not able to 
do in the directed case see Remark \ref{laremkitu}). 
\item Another question would be the existence of a phase where the 
number of open self-avoiding path starting from the origin behave likes its expected value 
(like here when $d\ge 3$ and $p$ large).
This could happens on $\bbZ^d$ if the dimension is large enough,
(say e.g.\ $d>4$ as $4$ is the critical dimension for self-avoiding walk, 
but the author has far from enough evidence to state this as 
a conjecture), 
and one could try to prove that using second moment method similar to \eqref{secmom}:
The question reduces then to know whether or not the Laplace transform of the overlap of 
two infinite self avoiding walk of is bounded in a neighborhood
of zero. This seems quite a difficult question to tackle but maybe not hopeless as there have been quite a lot of tools 
developed to
study and understand the self-avoiding walk in high-dimension 
(see \cite{cf:Slade}).
\item A challenging issue is 
to prove that this never happens in low dimension (say $d=2$ and $d=3$), and that for whatever small edge-dilution,
the quenched connectivity constant is strictly smaller than the annealed one.
As in the directed case, there are heuristics evidence that this happens in dimension $2$ and $3$,
but there is a need of a better (i.e.\ rigorous) understanding the behavior of the self-avoiding walk 
to transform that into a proof. The recent important result 
obtained on the hexagonal lattice \cite{cf:DS} gives some hopes that more about 
that will be known in the future at least in some special two dimensional case.
\item An easier one is to settle whether 
if, in high dimension, just above the critical point, 
the number of path is exponentially smaller than its expected value.
For this, the techniques used in this paper might adapt and  some precise asymptotics in $d$ 
are available for the value of $p_c$ is available in the literature (see \cite{cf:SVdH}).
\end{itemize} 

\section{Technical preliminaries} \label{trivio}

This Section is devoted to the proof of Proposition \ref{grow} and \ref{martin}.
We let the reader check that $W_N$ is a martingale, and prove all the other statements. The Section is divided into two parts,
one for the proof of the $0-1$ law statements, and the other for the proof of monotonicities in $p$ by coupling.


\subsection{Zero-One laws}

In this Section we prove
\begin{itemize}
 \item [(i)] $\bbP_p\left[W_{\infty}>0 \ |  \ \mathcal P\right]\in\{0,1\}$.
 \item [(ii)] $\mathcal X$ is as constant on the event $\mathcal P$.
\end{itemize}

The proof of the two statement use exactly the same ideas thus we prove $(i)$ in full detail and then explain how to adapt 
the proof to get $(ii)$.

\medskip

 Set $\kappa= \bbP_p\left[W_{\infty}>0\right]$. 
For $x\in \bbZ^d$, $N\in \bbN$ set $Z_{N}(x)$ to be the number of open path from $(0,0)$ to $(N,x)$
\begin{equation}
 Z_{N}(x):=\# \{ (S_n)_{n\in [0,N]} \ | \ S_{0}=0, S_{N}=x, S \text{ is open} \}.
\end{equation}
We define also $\theta_{N,x}$ to be the shift operator on the environment $X$ (which determines the set of open edges) by
\begin{equation}
\theta_{N,x}(X)_{(n,u),(n+1,v)}:=X_{(n+N,u+x),(n+N+1,v+x)}
\end{equation}
and $\theta_{N,x} W_\infty$ to be limit of the renormalized partition 
function constructed from the shifted environment $\theta_{N,x}(X)$ instead of $X$.
One has
\begin{equation}
 W_{\infty}:= \sum_{x\in \bbZ^d}p^{-N}Z_{N}(x) \theta_{N,x} W_\infty.
\end{equation}
Then, as $(\theta_{N,x} W_\infty)_{x\in \bbZ^d}$ is independent of $\mathcal F_N$ (recall \eqref{defFN}),
\begin{equation}
 \bbP_p( W_{\infty}>0 \ | \mathcal F_N)=
\bbP_p\left[\exists x, Z_{N}(x)>0,  \theta_{N,x} W_\infty>0 \ | \ \mathcal F_N \right]\ge \kappa\ind_{Z_N>0}. 
\end{equation}
Thus 
\begin{equation}
  \bbP_p( W_{\infty}>0 \ | \ \mathcal F_N)\ge \kappa\ind_{Z_N>0},
\end{equation}
so that making $N$ tends to infinity, one gets
\begin{equation}  
\ind_{\{W_{\infty}>0\}}\ge \kappa\ind_{\mathcal P},
\end{equation}
which is enough to conclude that either $\kappa=0$ or $W_{\infty}>0$ on the event of percolation.

%
%
%
%
%
%
%
%
%
%

\medskip

We now turn to $(ii)$. 
Given $r$ define $\bbP_p\left[\mathcal X> r\right]:=\kappa'$.
Then by the same reasoning as for the proof of $(i)$

\begin{equation}
 \bbP_p\left[ \mathcal X>   r \ | \mathcal F_N\right]\ge \kappa'\ind_{\{Z_N>0\}}.
\end{equation}

Thus making $N$ tends to infinity one get that either $\kappa'=0$ or that $\mathcal X> r$ a.s.\
when $\mathcal P$ occurs, and this for all $r$.

\qed

\subsection{Existence of the threshold}

We prove now that the function in \eqref{decres} and \eqref{decres2} are non-decreasing (and thus that $p_c^{(2)}$ and $p_c^{(3)}$ are well defined)
by a coupling argument.
The idea of the proof is quite common in percolation, it is to 
couple the realization of the process for different $p$ in a monotone way. 
 We consider a field of random variables $(U_{(n,x),(n+1,y)})_{n\ge 0, x,y \in \bbZ^d}$ uniformly distributed on $[0,1]$ (denote its law by $\bbP$).
 Then one sets 
 \begin{equation}
  X_{(n,x),(n+1,y)}(p):= \ind_{\{U_{(n,x),(n+1,y)}\le f(y-x) p\}}.
 \end{equation}
It is immediate to check that $  X_{(n,x),(n+1,y)}(p)$ has distribution $\bbP_p$.
This construction implies immediately that $\bbP_p(\mathcal P)$ is a non-decreasing function of $p$ and existence of $p_c$ (but this is an already well established fact).
In this section we write $W_N(p)$ for $W_N(X(p))$.
\medskip

Set \begin{equation}
  \mathcal F_p:= \sigma\left( X_{(n,x),(n+1,y)}(p), n\in \bbN, x,y \in \bbZ^d \right)
 \end{equation}

Our key observation is that for $p'\le p$

\begin{equation}\label{voiture}
 \bbE\left[W_N(p') \ | \  \mathcal F_p\right]=W_N(p). 
\end{equation}

Indeed the probability that a path is open for $X(p')$ knowing that it is open for $X(p)$ is equal to $(p'/p)^N$.
This gives immediately  
\begin{equation}
 \bbP\left[W_\infty(p')>0\right]\le \bbP\left[W_\infty(p)>0\right].
\end{equation}

\medskip

To check that 
$\tf(p')-\log p'\le \tf(p)-\log p$, it is sufficient to show that for all $\gep$, for all sufficiently large $N$
\begin{equation}\label{fthew}
 \log W_N(p')\le N(\tf(p)-\log p+\gep).
\end{equation}
For $N$ large enough $W_N(p)\le N(\tf(p)-\log p+\gep/2)$, by the definition of $\tf(p)$ and furthermore by \eqref{voiture} and the Borel-Cantelli Lemma.
\begin{equation}
  \log W_N(p')\le \log W_N(p)+N\gep/2,
\end{equation}
so that \eqref{fthew} holds.
\qed

%
%
%
%
%
%
%
%
%
%
%
%
%
%
%
%
%
%
%
%
%
%
%
%
%
%

\section{Size biasing}

A key element of our proof is to consider the system under a law which has been tilted by  $W_N$: 
the size-biased version of $\bbP_p$.
In this section we present a nice way to describe the law of the environment under the size biased law, encountered in a paper of 
Birkner on directed polymers \cite{cf:Birk}, and much related to the so-called spine techniques used in the study of branching structures 
(see e.g. \ \cite{cf:LPP}).
Then we use this construction to get some operational condition under which the inequality $\tf(p)<\log p$ holds.
The argument developed here are completely general and can be used for any kind of directed percolation model.

\subsection{A description of the distribution of the environment under the size-biazed law}

We define $\tilde \bbP^N_p$ the so-called size-biased measure on the edge-environment $X$, as a measure absolutely continuous w.r.t to $\bbP_p$ 
and whose Radon-Nikodym derivative is given by
\begin{equation}
\frac{\dd \tilde \bbP^N_p}{\dd\bbP_p}(X):=W_N(X),
\end{equation}
and then one studies the behavior of $W_N$ under this new measure.
There are some reason for doing so, e.g.\ the sequence of $W_N$ converges to zero in law  if and only if 
$W_N\to \infty$ in law under the size biased measure $\tilde \bbP^N_p$.

\medskip

We give a nice representation of the size-biased measure, adapted from the work of Birkner \cite{cf:Birk}
on directed polymers.
First we sample a random walk $T$ chosen according to probability measure $\bP$
given by \eqref{ouflemec}.
Then given a realization of $X$ under $\bbP_p$, 
we consider $\tilde X$ an alternative percolation environment whose definition is given by

\begin{equation}
\tilde X_{(n,x),(n+1,y)}:=\begin{cases}
                           1 \quad \text{if } T_n=x,\ T_{n+1}=y,\\
X_{(n,x),(n+1,y)} \quad \text{otherwise}.
                          \end{cases}
\end{equation}

Let $\tilde Z_N$, $\tilde W_N$ be defined analogously to $Z_N$ and $W_N$ but using environment 
$\tilde X$ instead of $X$. Then $\tilde W_N$ under the law $\bbP_p \otimes \bP$ as the same law that 
$W_N$ under the size biased measure $\tilde \bbP_p$. More precisely

\begin{proposition}
For any function $F: \bbR \to \bbR$ one has
\begin{equation}
 \bbE_p\bE\left[F(\tilde Z_N)\right]=\tilde \bbE^N_p\left[F(Z_N)\right]:=\bbE_p\left[W_N F(Z_N)\right].
\end{equation}
(the same result being obviously valid when $Z_N$ is replaced by $W_N$)
\end{proposition}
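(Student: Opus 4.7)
The plan is a direct computation based on expanding $Z_N$ as a sum over directed paths and interchanging expectations. Write
\begin{equation}
Z_N(X) = \sum_{S} \prod_{n=0}^{N-1} X_{(n,S_n),(n+1,S_{n+1})},
\end{equation}
where the sum ranges over all oriented nearest-neighbor paths $S=(S_n)_{0\le n\le N}$ with $S_0=0$ and finite increments. Then $W_N=p^{-N}Z_N$, so
\begin{equation}
\bbE_p\left[W_N\, F(Z_N)\right] = p^{-N} \sum_{S} \bbE_p\!\left[\left(\prod_{n=0}^{N-1} X_{(n,S_n),(n+1,S_{n+1})}\right) F(Z_N)\right].
\end{equation}

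For a fixed path $S$, the product of edge variables equals $\ind_{\{S \text{ is open}\}}$. Under $\bbP_p$, the edges along $S$ are independent of the remaining edges, so conditioning on $\{S \text{ is open}\}$ has two effects: (i) it picks up the factor $\bbP_p(S \text{ open}) = p^N \prod_{n=0}^{N-1} f(S_{n+1}-S_n)$, and (ii) the conditional law of $X$ is that of the original $X$ with the edges along $S$ forced to the value $1$. By construction, this conditional environment has exactly the distribution of $\tilde X$ given $T=S$. Hence
\begin{equation}
\bbE_p\!\left[\ind_{\{S \text{ open}\}} F(Z_N(X))\right] \;=\; p^{N}\prod_{n=0}^{N-1} f(S_{n+1}-S_n)\cdot \bbE_p\!\left[F\bigl(Z_N(\tilde X)\bigr)\,\big|\, T=S\right].
\end{equation}

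Plugging this back cancels the $p^{-N}$ factor and yields
\begin{equation}
\bbE_p\!\left[W_N F(Z_N)\right] = \sum_{S} \left(\prod_{n=0}^{N-1} f(S_{n+1}-S_n)\right) \bbE_p\!\left[F(\tilde Z_N)\,\big|\, T=S\right] = \bbE_p\bE\!\left[F(\tilde Z_N)\right],
\end{equation}
using the very definition \eqref{ouflemec} of the law $\bP$ of $T$ and that the increments of $T$ are i.i.d.\ with density $f$. This proves the identity, and the same computation applies verbatim with $Z_N$ replaced by $W_N=p^{-N}Z_N$.

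The only point requiring care is step (ii): one must verify that conditioning the product measure $\bbP_p$ on the event that a specified set of edges takes value $1$ leaves the remaining edges with their original Bernoulli laws, so that the conditional environment genuinely coincides in distribution with $\tilde X$ given $T=S$. This follows from the independence of the edge variables, so I do not expect any essential obstacle.
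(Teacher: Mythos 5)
Your proof is correct, and it is essentially the standard direct verification behind this identity. The paper itself gives no computation and simply cites \cite{cf:Birk}, Lemma~1, for exactly the argument you spell out: expand $Z_N$ as a sum over paths, use independence of the edge variables to see that conditioning on a fixed path $S$ being open forces those edges to $1$ while leaving the remaining Bernoulli variables untouched, identify this conditional law with the law of $\tilde X$ given $T=S$, and recognize that the factor $\prod_{n=0}^{N-1}f(S_{n+1}-S_n)$ produced by $\bbP_p(S\text{ open})/p^N$ is precisely the weight $\bP(T_0=S_0,\dots,T_N=S_N)$. So your contribution relative to the paper is to unpack the cited lemma rather than to find a different route.

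Two small remarks. First, the sum defining $Z_N$ should range over all oriented paths $S:\{0,\dots,N\}\to\bbZ^d$ with $S_0=0$, not only ``nearest-neighbor'' ones; the model is spread-out in general. This is harmless because any path with some increment outside the support of $f$ has $\bbP_p(S\text{ open})=0$ and contributes nothing, but the wording should match the model. Second, it is worth stating explicitly (as you note at the end) that the matching of conditional laws is edge-by-edge: both the conditional law of $X$ given $\{S\text{ open}\}$ and the law of $\tilde X$ given $T=S$ are product measures in which the edges of $S$ are deterministically $1$ and every other edge $\bigl((n,x),(n+1,y)\bigr)$ is Bernoulli with parameter $pf(y-x)$; this is exactly where independence of the $X_{(n,x),(n+1,y)}$ under $\bbP_p$ is used.
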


\begin{proof}
 See \cite{cf:Birk} Lemma 1.
\end{proof}

\subsection{A link between the size-biased law and the original law}

Some important properties of the law $W_N$ under the original measure can be recovered from 
its property under the size biased measure. 
For example if $W_{N}$ tends to $0$ whereas its expectation is equal to 
one for every $N$, it means that $W_N$ must be large on some atypical event that carries most of the expectation.
Under the size-biased measure, this event must become typical.

\begin{proposition}
We have the following properties concerning $\tilde W_N$,

\begin{itemize}
  \item [(i)] If under $\bbP_p\otimes \bP$,
\begin{equation}
 \lim_{N\to \infty} \tilde W_N=\infty, 
\end{equation}
in law, then
\begin{equation}
 W_{\infty}=0, \bbP_p \text{ a.s.}
\end{equation}

 \item [(ii)] If there exists a constant $c>0$ such that for all $N$ large enough
\begin{equation}\label{expotruc}
  \bbE_p\bE\left[\tilde W_N\le N \right]\le e^{-cN},
\end{equation}
then
\begin{equation}\label{expotruc2}
 \tf(p) < \log p-c/2.
\end{equation}

\end{itemize}
\end{proposition}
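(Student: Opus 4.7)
The plan is to pivot on the Radon--Nikodym identity $\tilde\bbE_p^N[F(W_N)]=\bbE_p[W_N F(W_N)]$ supplied by the size-biasing construction, combined with the a.s.\ convergence $W_N\to W_\infty$ from Proposition~\ref{martin}.

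For (i), I would first translate the hypothesis: since $\tilde W_N$ under $\bbP_p\otimes\bP$ has the same law as $W_N$ under the size-biased measure $\tilde\bbP_p^N$, the assumption that $\tilde W_N$ tends to infinity in law is equivalent to
\begin{equation*}
\bbE_p[W_N \mathbf{1}_{\{W_N\le K\}}]=\tilde\bbP_p^N(W_N\le K)\xrightarrow[N\to\infty]{}0\qquad\text{for every }K>0.
\end{equation*}
Next I would introduce a continuous cutoff $\phi_K:[0,\infty)\to[0,K]$ which equals $x$ on $[0,K]$, vanishes on $[2K,\infty)$ and is affine in between, so that $0\le\phi_K(W_N)\le W_N\mathbf{1}_{\{W_N\le 2K\}}$. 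Bounded convergence combined with $W_N\to W_\infty$ a.s.\ then gives $\bbE_p[\phi_K(W_\infty)]=\lim_N\bbE_p[\phi_K(W_N)]=0$, forcing $W_\infty\in\{0\}\cup[2K,\infty)$ almost surely. Letting $K\to\infty$ and using $W_\infty<\infty$ a.s.\ (a consequence of Fatou applied to the non-negative martingale $W_N$, which gives $\bbE_p[W_\infty]\le 1$), I conclude $W_\infty=0$ a.s.

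For (ii), the hypothesis rewrites as $\bbE_p[W_N \mathbf{1}_{\{W_N\le N\}}]\le e^{-cN}$. For any threshold $\alpha_N\in(0,N]$ the Markov bound on the truncated mass combined with a separate treatment of the upper tail gives
\begin{equation*}
\bbP_p(W_N>\alpha_N)\le \alpha_N^{-1}\bbE_p[W_N\mathbf{1}_{\{\alpha_N<W_N\le N\}}]+\bbP_p(W_N>N)\le \alpha_N^{-1}e^{-cN}+\bbP_p(W_N>N).
\end{equation*}
Choosing $\alpha_N=e^{-(c/2+\delta)N}$ for any fixed $\delta\in(0,c/2)$ makes the first term $e^{-(c/2-\delta)N}$, which is summable in $N$. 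For the upper-tail term $\bbP_p(W_N>N)$ I would invoke Proposition~\ref{martin}: since the martingale limit $W_\infty$ is a.s.\ finite, the event $\{W_N>N\}$ occurs only finitely often a.s. Borel--Cantelli applied to the summable part then forces $W_N\le e^{-(c/2+\delta)N}$ for all large $N$ a.s., giving $\limsup_N N^{-1}\log W_N\le -(c/2+\delta)<-c/2$, which is $\tf(p)<\log p-c/2$.

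The main delicacies are: in (i), the discontinuity of $W_N\mathbf{1}_{\{W_N\le K\}}$ in $W_N$, which forces the introduction of the continuous cutoff $\phi_K$ in order to pass to the a.s.\ martingale limit; in (ii), the fact that Markov's inequality alone only gives $\bbP_p(W_N>N)\le 1/N$, which is not summable, so the a.s.\ finiteness of $W_\infty$ from Proposition~\ref{martin} is essential to dispose of the upper-tail term without additional quantitative work.
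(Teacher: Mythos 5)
Your proposal is correct and takes essentially the same route as the paper: the size-biasing identity converts the hypotheses into statements about $\bbE_p\left[W_N\ind_{\{W_N\le\cdot\}}\right]$, and a change-of-measure/Markov estimate plus Borel--Cantelli finishes (ii). The only variations are cosmetic — you replace the paper's contrapositive one-liner for (i) with an explicit cutoff/bounded-convergence argument, and in (ii) you truncate at level $N$ (needing only $W_\infty<\infty$ from the martingale property) rather than at $1$ (the paper invokes $W_\infty=0$ from part (i)); your $\delta$-slack also delivers the strict inequality directly, whereas the paper's choice of threshold $e^{-cN/2}$ formally only gives $\tf(p)\le\log p-c/2$.
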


\begin{proof}
 The first point is classic. If $W_{\infty}$ is non degenerate, there exists constants $a>0$ and $A>a$ such that 
$\bbP_p \left[W_N\in (a,A)\right]>c$ uniformly for all $N$.
Then $\tilde \bbP^N_p\left(W_N \in [a,A]\right)\ge ac$ so that in can't converge to $\infty$ in law.

\medskip

For the second point, if \eqref{expotruc} is valid then $W_{\infty}=0$ from the first point, 
so that once $N$ is large enough $W_N\le 1$.
We just have to check that $W_N\in (e^{-cN/2} ,1) $ does not happen infinitely often. One has
\begin{equation}
\bbP_p\left[W_N\in (e^{-cN/2} ,1)\right]=\tilde\bbE^N_p\left[\frac{1}{W_N}\ind_{W_N\in (e^{-cN/2} ,1)} \right]
\le e^{cN/2}\tilde\bbP_p\left[W_N\in (e^{-cN/2} ,1) \right]\le e^{-cN/2}.
\end{equation}

Thus by Borel-Cantelli Lemma $\bbP_p$-a.s.\ eventually for all $N$ 
\begin{equation}
 W_N \le e^{-cN/2}.
\end{equation}

\end{proof}

\subsection{Reduction to a finite volume criterion}

The criterion given by \eqref{expotruc}, will be satisfied if one can bound 
$\log \tilde W_N$ from below by a sum of independent variable 
that have positive mean. This way of doing gives us a very simple criterion for having $\tf(p)<\log p$.

Set 

\begin{equation}
 \bar Z_N::= \#\{S: \{0,\dots, N\}\ | \ S_0=0, \ S_N=T_N, S \text{ is open for $\tilde X$} \}.
\end{equation}

It is straight-forward to see that 

\begin{equation}
  \bar Z_{N+M}\ge \bar Z_N\times \bar Z_M^{(1)},
\end{equation}
where $ \bar Z_M^{(1)}$ is independent of $\bar Z_{N}$ and has the same law that $\bar Z_M$.
Hence we have

\begin{proposition}\label{salutlesamis}
If for some value of $N_0$ one has 
\begin{equation}\label{missmay}
 \bbE_p\bP \log \bar Z_{N_0}> N_0 \log p,
\end{equation}
 then there exists a constant $c$ such that \eqref{expotruc} holds (and thus do does \eqref{expotruc2}).
\end{proposition}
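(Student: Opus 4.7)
The plan is to bootstrap the hypothesis \eqref{missmay} via iteration of the super-multiplicative inequality $\bar Z_{N+M}\ge \bar Z_N\cdot \bar Z_M^{(1)}$ together with a standard exponential concentration estimate for i.i.d.\ bounded random variables (Hoeffding/Cram\'er). The key observation is that \eqref{missmay} provides a strict gap between $\bbE_p\bE[\log \bar Z_{N_0}]$ and $N_0\log p$, and this gap propagates under iteration to give an exponential lower-tail bound on $\bar Z_{kN_0}/p^{kN_0}$.

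Concretely, set $\eta:=\bbE_p\bE[\log\bar Z_{N_0}]-N_0\log p>0$. Iterating the displayed super-multiplicativity $k$ times yields, for $N=kN_0$,
\[
\log \bar Z_{N}\;\ge\; \sum_{i=1}^k Y_i,\qquad Y_i:=\log\bar Z_{N_0}^{(i)},
\]
where each $\bar Z_{N_0}^{(i)}$ is built from the $\tilde X$-environment and walk-increments of $T$ restricted to the time-slab $[(i-1)N_0,iN_0)$, re-centered at the spatial point $T_{(i-1)N_0}$. Independence of the $X$-variables across disjoint time-slabs, independence of the increments of $T$, and the spatial translation invariance of $\bbP_p$ together ensure that the $Y_i$ are i.i.d.\ copies of $\log\bar Z_{N_0}$ under $\bbP_p\otimes\bP$; in particular $\bbE[Y_i]=N_0\log p+\eta$.

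Next one observes that the $Y_i$ are bounded. The trivial path $n\mapsto T_{(i-1)N_0+n}-T_{(i-1)N_0}$ is always open in the shifted $\tilde X$ environment, so $Y_i\ge 0$; and since the total number of $N_0$-step paths out of the origin is at most $M^{N_0}$ with $M:=|\mathrm{supp}\,f|$, we have $Y_i\le N_0\log M$. Boundedness of the $Y_i$ then allows a direct application of Hoeffding's inequality to the centered sum, giving
\[
\bbP_p\otimes\bP\!\left[\sum_{i=1}^k Y_i \le k(N_0\log p+\eta/2)\right]\le e^{-c_0 k}
\]
for some explicit $c_0=c_0(N_0,\eta,M)>0$. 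Combined with the trivial comparison $\tilde W_N\ge \bar Z_N/p^N$ (since $\tilde Z_N$ counts all open $N$-paths, not merely those terminating at $T_N$), this yields $\bbP_p\otimes\bP[\tilde W_{kN_0}\le e^{kN_0\eta/2}]\le e^{-c_0 k}$, in particular the much weaker estimate $\bbP_p\otimes\bP[\tilde W_{kN_0}\le 1]\le e^{-c_0 k}$ for all $k$ large enough. A short interpolation for $N\in[kN_0,(k+1)N_0)$ based on $\bar Z_N\ge \bar Z_{kN_0}$ (absorbing the factor $p^{-N_0}$ into constants) transfers the estimate to all large $N$ and delivers \eqref{expotruc} with, say, $c:=\eta/(3N_0)$.

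The only genuinely delicate point is checking rigorously that the $Y_i$ are i.i.d.\ with the asserted marginal law: one must verify that the joint distribution of $(\tilde X,T)$, when restricted to the $i$-th time-slab and recentered by $((i-1)N_0,T_{(i-1)N_0})$, is an exact copy of the full $(\tilde X,T)$. This is essentially a bookkeeping exercise relying on independence of the edge variables, independence of the walk increments, and translation invariance of $\bbP_p$; once it is written out, the remainder is a textbook application of Hoeffding's inequality.
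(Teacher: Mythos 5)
Your proposal is correct and follows essentially the same route as the paper: iterate the super-multiplicative bound $\bar Z_{N+M}\ge \bar Z_N\cdot \bar Z_M^{(1)}$ to reduce to a sum of i.i.d.\ bounded random variables $\log\bar Z_{N_0}^{(i)}$, then apply a concentration inequality to get an exponentially small lower-tail probability. You merely spell out details the paper leaves implicit (the bounds $0\le \log\bar Z_{N_0}\le N_0\log|\mathrm{supp}\,f|$, the comparison $\tilde W_N\ge \bar Z_N/p^N$, and the interpolation to $N$ not a multiple of $N_0$ via $\bar Z_N\ge\bar Z_{kN_0}$), all of which are sound.
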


\begin{proof}

For $N\ge 0$, set $N=:nN_0+r$ be the Euclidean division of $N$ by $N_0$.
One has

\begin{equation} \label{maminova}
 \log \bar Z_N\ge \sum_{i=1}^n \log \bar Z_{N_0}^{(i)},
\end{equation}
where $ \bar Z_{N_0}^{(i)}$ are independent copies of $Z_{N_0}$.
As a sum of i.i.d.\ bounded random variables, the probability of large deviation of the r.h.s.\ of 
\eqref{maminova} below its average is exponentially small in $n$ (thus in $N$) and this ends the proof.
\end{proof}

\section{Proof of Theorem \ref{mainres}}

Now we want to use the criterion provided by Proposition \ref{salutlesamis} to get lower bounds for
$p_c^{(2)}$ for specific models, either high-dimensional nearest-neighbor or spread-out. The particularity of 
these model is that all the $p^{(i)}_c$ are really close to one.

Our strategy is just to estimate
the probability that $\bar Z_N\ge 2$  to get a lower-bound on $ \frac{1}{N}\bE \bbE_p \log \bar Z_N$.

We start with the spread-out model

\begin{lemma}

For the spread-out model with $d\ge 3$ fixed, for all $p\ge 1$, 
\begin{equation}
  \frac{1}{N}\bE \bbE_p \log \bar Z_N \ge \log 2 \sum_{k=2}^N \frac{N-k}{N} f^{\ast 2k}(0)(1+O(N L^{-d})).
\end{equation}
In particular choosing $N_L:=L^{d/2}$ one gets 
\begin{equation}
 \frac{1}{N_L}\bE \bbE_p \log \bar Z_{N_L} \ge \log 2 (1+O(L^{-d/2}))\sum_{k=2}^N  f^{\ast 2k}(0).
\end{equation}

\end{lemma}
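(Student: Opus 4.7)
The plan is to bound $\log \bar Z_N$ from below by isolating, for each time window of the spine $T$, the contribution of \emph{simple alternatives}: paths $S$ that are open for $\tilde X$, coincide with $T$ outside a single window $(i, i+k)$, and differ from $T$ at every interior time. Since $T$ itself is always open for $\tilde X$, we have $\bar Z_N \ge 1$, so $M := \bar Z_N - 1 \in \bbN$. The central elementary inequality I would use is
\begin{equation*}
\log(1 + M) \ge (\log 2)\min(M, 1) \ge (\log 2)\left(M - \tfrac{1}{2}M(M-1)\right),
\end{equation*}
valid for any integer $M \ge 0$: the first step is the tangent-chord bound for the concave function $\log(1+\cdot)$ on $[0,1]$ combined with $\log(1+y) \ge \log 2$ for $y \ge 1$, while the second follows by a direct check on the first few integers (the right-hand side is nonpositive for $M \ge 3$). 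Taking expectations reduces the lemma to a lower bound on $\bE\bbE_p M$ and an upper bound on the factorial moment $\bE\bbE_p[M(M-1)]$.

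For the first moment, for each $i, k$ with $k \ge 2$ and $i + k \le N$ set
\begin{equation*}
M_{i,k} := \#\bigl\{S : S_0 = 0,\ S_N = T_N,\ S_m = T_m \text{ for } m \notin (i, i+k),\ S_m \ne T_m \text{ for } m \in (i, i+k),\ S \text{ open for } \tilde X\bigr\}.
\end{equation*}
The window of any simple alternative is uniquely determined, so $\sum_{i,k} M_{i,k} \le M$. Under $\bbP_p$ the $k$ edges of such an $S$ inside the window are disjoint from $T$ and each independently open with probability $pf(\cdot)$. Summing over interior vertices $(y_1, \ldots, y_{k-1})$ that avoid $(T_{i+1}, \ldots, T_{i+k-1})$, then averaging over the walk $T$ and using the symmetry identity $\sum_x f^{*k}(x)^2 = f^{*2k}(0)$, yields a main term $p^k f^{*2k}(0)$; reinstating the avoidance constraint costs, by a union bound, at most $p^k\sum_{m=1}^{k-1} f^{*2m}(0) f^{*2(k-m)}(0)$. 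For the spread-out model in dimension $d \ge 3$, the local-CLT bound $f^{*2k}(0) = O(L^{-d} k^{-d/2})$ makes this correction a factor $O(L^{-d})$ smaller than the main term, uniformly in $k$. Since $p \ge 1$, summing over $i \in \{0, \ldots, N-k\}$ yields
\begin{equation*}
\bE\bbE_p M \ge (1 - O(L^{-d}))\sum_{k=2}^N (N - k + 1)\, f^{*2k}(0).
\end{equation*}

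The main step is to show $\bE\bbE_p[M(M-1)] \le \bE M \cdot O(NL^{-d})$. The quantity $M(M-1)$ counts ordered pairs of distinct non-spine open paths for $\tilde X$, and I would split such pairs by the joint structure of their deviation windows. The principal contribution comes from pairs of simple alternatives with disjoint-in-time windows: the two indicator events then involve disjoint edge sets in $\tilde X$ and depend on independent increments of $T$, so their contributions factorize and sum to at most $(\bE M)^2(1+o(1))$. Pairs inhabiting the same window are handled via $\bE\bbE_p[M_{i,k}(M_{i,k}-1)] \le p^{2k}\sum_x f^{*k}(x)^3 = O(L^{-2d} k^{-d})$, which sums to $O(NL^{-2d}) = o((\bE M)^2)$. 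Pairs of simple alternatives with overlapping (but not equal) windows, and pairs in which at least one path carries multiple deviation intervals, each acquire additional $f^{*2k'}(0)$ factors and, by the same local-CLT estimate, are $o((\bE M)^2)$. Since $\bE M = O(NL^{-d})$, this gives $\bE\bbE_p[M(M-1)] = O((\bE M)^2) = \bE M \cdot O(NL^{-d})$. Feeding the two bounds into the inequality of the first paragraph and dividing by $N$ yields
\begin{equation*}
\frac{1}{N}\bE\bbE_p \log \bar Z_N \ge \log 2 \sum_{k=2}^N \frac{N-k}{N} f^{*2k}(0)(1 + O(NL^{-d})),
\end{equation*}
as claimed. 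The main obstacle is the second-moment enumeration: every case is eventually controlled by the same local-CLT convolution estimate $f^{*2k}(0) = O(L^{-d}k^{-d/2})$, but the case analysis for overlapping or nested deviation windows — verifying that cross-terms truly stay within $o((\bE M)^2)$ rather than receiving combinatorial reinforcement — is the most delicate bookkeeping and is where all spread-out-specific moment inputs are consumed.
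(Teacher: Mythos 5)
Your argument is in the same Bonferroni/second-moment family as the paper's, but it routes through a heavier estimate than the paper actually uses, and two of the steps you lean on are asserted rather than established. Both proofs start from the observation that $\bar Z_N\ge 2$ is equivalent to the existence of an open ``bridge'' over the spine $T$. The paper applies the Bonferroni lower bound \emph{directly to the collection $\mathcal B$ of bridges}: it writes $\bbP_1[\bar Z_N\ge 2]\ge\bigl(\sum_{B\in\mathcal B}\bbP_1[B\text{ open}]\bigr)\bigl(1-\max_{B}\bbP_1[\exists \tilde B\ne B\text{ open}\mid B\text{ open}]\bigr)$ and then kills the conditional probability by a single union bound over locations of $\tilde B$ (using that, conditionally on $B$ open, edges off $B$ are still independent Bernoullis), which costs $O(NL^{-d})$ uniformly in $T$ and $B$ --- a conditional \emph{first}-moment estimate. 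Your version replaces this by the factorial moment $\bE[M(M-1)]$ of $M=\bar Z_N-1$, which counts \emph{all} non-spine open paths, composite ones included. Because a path can string together several bridges, $M$ is of order $2^{\#\{\text{open bridges}\}}$, so $M(M-1)$ sums over all ordered pairs of such paths; controlling it genuinely requires the case analysis over disjoint / overlapping / nested / multi-window pairs that you acknowledge but do not carry out, and in addition you invoke $\bE M=O(NL^{-d})$ without proof (your construction $\sum_{i,k}M_{i,k}\le M$ only gives a \emph{lower} bound on $\bE M$; the upper bound needs a resummation over all finite collections of disjoint bridges). None of this is wrong --- in the regime $N=L^{d/2}$ the expected bridge count is $o(1)$, so $M$ does concentrate on $\{0,1\}$ and the local-CLT estimate $f^{*2k}(0)=O(L^{-d}k^{-d/2})$ really does control everything --- but the paper's conditional union bound sidesteps exactly the bookkeeping you flag as the delicate point, which is why it is materially shorter.

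If you want to keep your factorial-moment framing without incurring that cost, note that you may replace $M$ by $M':=\sum_{i,k}M_{i,k}$ (simple alternatives only) throughout the elementary inequality: since $M\ge M'$,
\begin{equation*}
\log(1+M)\;\ge\;(\log 2)\,\ind_{\{M\ge 1\}}\;\ge\;(\log 2)\,\ind_{\{M'\ge 1\}}\;\ge\;(\log 2)\bigl(M'-\tfrac12 M'(M'-1)\bigr),
\end{equation*}
and now $\bE[M'(M'-1)]$ is a sum over \emph{pairs of single bridges}, which is essentially the paper's conditional estimate in a different notation and dispenses with any upper bound on $\bE M$. With that modification your route and the paper's are equivalent up to reorganisation.
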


As a consequence of the Lemma, one gets that \eqref{missmay} holds as soon as 
\begin{equation}
 p\ge 1+\log 2 \sum_{k=2}^N  f^{\ast 2k}(0)+O(L^{-3d/2}).
\end{equation}
and this gives the wanted lower-bound for $p_c^{(2)}$ of Theorem \ref{mainres}.

\begin{proof}

One can focus on the case $p=1$ with no loss of generality.
Moreover one can bound the expectation of $\log \bar Z_N$ as follows, 
\begin{equation}
 \bE \bbE_1 \log \bar Z_N \ge  \bE\bbP_1 \left[\bar Z_N\ge 2\right]\log 2.
\end{equation}

Then notice that $Z_N\ge 2$ if and only if there exists $a<b$ and an open path linking
$(a,T_a)$ and $(b,T_b)$ that does not use any edge nor meet any sites on the path $T$ (except the starting and ending sites).
Given a fixed $T$ we want to estimate $\bbP_1 \left[\bar Z_N\ge 2\right]$.
We call a path linking $(a,T_a)$ and $(b,T_b)$ a bridge (see Fig. \ref{bridge}), we call $\mathcal B_T=\mathcal B$ the set of ``bridges'' on $(T_n)_{0\le N}$
\begin{multline}
\mathcal B_T:=\{(B_n)_{a\le n \le b}\ | \ \exists 0\le a<b \le n, 
\\ \ B_a=T_a, B_b=T_b, \forall c\in (a,b), B_c\ne T_c, f(B_{c+1}-B_c)>0\} 
\end{multline}

\begin{figure}[hlt]
\begin{center}
\leavevmode 
\epsfxsize =14 cm
\psfragscanon
\psfrag{O}{$O$}
\psfrag{N}{$N$}
\psfrag{a}{$a$}
\psfrag{b}{$b$}
\epsfbox{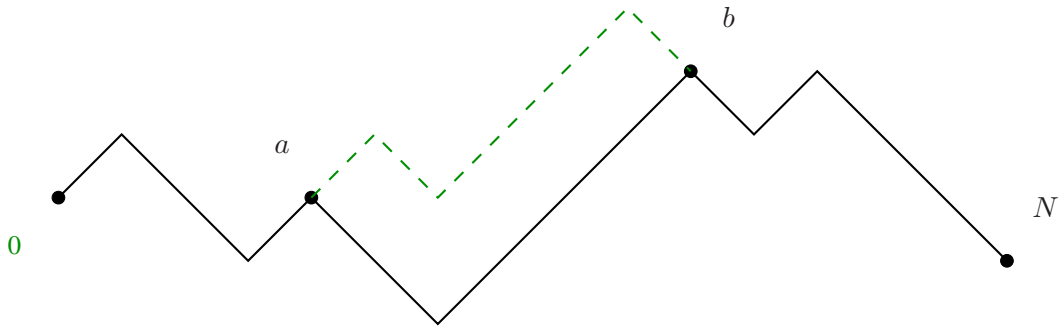}
\end{center}
\caption{\label{bridge} Illustration of a trajectory $T$ of length $N=15$ (full line), together with a bridge between $a=4$ and $b=10$
for the nearest-neighbor model in dimension $1+1$. The bridge does not meet the trajectory $T$ (site-wise) except at the starting and ending points. 
}
\end{figure}
One has

\begin{multline}\label{dirlo}
 \bbP_1 \left[\bar Z_N\ge 2\right]=\bbP_1\left[\exists B\in \mathcal B\ ; \ B \text{ is open }\right]
\\
\ge \sum_{B\in \mathcal B } \bbP_1\left[B \text{ is open }\right]
- \sum_{B \mathcal B }\bbP_1\left[ B \text{ is  open } \ ; \ \exists \tilde B\in \mathcal B\setminus \{ B\}, \tilde B \text{ is open }\right]
\\
\ge 
\left(\sum_{B\in \mathcal B } \bbP_1\left[B \text{ is open }\right]\right)\left(1-\max_{B\in \mathcal B} 
\bbP_1\left[ \exists \tilde B\in \mathcal B\setminus \{ B\}, \tilde B \text{ is open } \ | \  B \text{ is open}\right]\right).
\end{multline}

We first control the term $\sum_{B\in \mathcal B } \bbP_1\left[B \text{ is open }\right]$ and its expected value with respect to $T$.
It is larger than
\begin{equation}\label{dirlo2}
  \sum_{a=0}^{N-2}\sum_{b=a+2}^{N}
\left( f^{\ast (b-a)}(T_b-T_a)-\sum_{c=a+1}^{b-1}f^{\ast (c-a)}(T_c-T_a)f^{\ast (b-c)}(T_c-T_b)\right),
\end{equation}
(in the first term, some paths between $(a,T_a)$ and $(b,T_b)$ intersect $T$ at intermediate point, i.e.\ that are not bridges, have been counted, 
the second term is subtracting the contribution of all those path: all path intersecting $T$ at 
time $c$ with varying $c$.
Some contribution are subtracted more than once and that is the reason why we get an inequality).

Using the fact that $f$ is symmetric we can compute the expected value of the first term in \eqref{dirlo2} (averaging over $T$ that has law $\bP$). 
It is equal to 
\begin{equation}
\sum_{a=0}^{N-2}\sum_{b=a+2}^{N} f^{\ast 2(b-a)}(0)
= \sum_{k=2}^N (N-k) f^{\ast 2k}(0).
\end{equation}
The expected value with respect to $\bP$ of the second term in \eqref{dirlo2} is smaller than

\begin{equation}
 \sum_{a=0}^{N}\left(\sum_{b=0}^{\infty} f^{\ast (2b)}(0)\right)^2=O(NL^{-2d})
\end{equation}

Now it remains to show that for all choices of $B$ and $T$,
\begin{equation}
\bbP_1\left[ \exists \tilde B\in \mathcal B\setminus \{ B\}, \tilde B \text{ is open } \ | \  B \text{ is open}\right] \text{ is small }.
\end{equation}

\medskip

To get an additional bridge on $T$ between $a$ and $b$ knowing than $B$ is open, one can either have an open path not using edges of 
$T$ and $B$ that links $(a,T_a)$ to $(b,T_b)$ or use open edges 
of $B$ to construct a new open bridge say
by having an open path that links say $(a,B_a)$ and $(b,T_b)$ (or $(a,T_a)$ and $(b,B_b)$, or $(a,B_a)$ and $(b,B_b)$).
We can use union bound to get that

\begin{multline}
 \bbP_1\left[ \exists \tilde B \in \mathcal B\setminus\{B\}\  \tilde B \text{ is open } | B \text{ is open } \right]\\
\le \sum_{a=1}^{N-1}\sum_{b=a+1}{N}p^{b-a}
\sum f^{\ast (b-a)}(T_b-T_a)+f^{\ast (b-a)}(B_b-T_a)\\
+f^{\ast (b-a)}(T_b-B_a)+f^{\ast (b-a)}(T_b-B_a)
\\
\le 4 \sum_{a=0}^N \sum_{n=1}^\infty \max_z  f^{\ast n}(z) \le O(N L^{-d}).
\end{multline}

The conclusion of all this is that
\begin{equation}
 \bE \bbP_1 \left[\bar Z_N\ge 2\right]=
 \left(\sum_{k=2}^N (N-k) f^{\ast 2k}(0)+O(NL^{-2d})\right)(1-O(NL^{-d})).
\end{equation}
And thus that
\begin{equation}
  \bE \bbE_1 \left[\log \bar Z_N\right]\ge \log 2 \left( \sum_{k=2}^N (N-k) f^{\ast 2k}(0)+O(N^{2}L^{-2d})\right).
\end{equation}

\end{proof}

\medskip

We can turn to the nearest neighbor case which is a bit simpler

\begin{lemma}

For the nearest neighbor we have the following lower asymptotic in $d$, valid for all $p\ge 1$, and for all $N$ 
\begin{equation}
   \bE\bbE_p [\log \bar Z_N ]\ge \log 2 \left[1-\left[1-\frac{1}{2d^2}+O(d^{-3}) \right]^N\right].
\end{equation}
In particular choosing $N=d$ one gets 
\begin{equation}
 \frac{1}{d}\bE \bbE_p \log \bar Z_{d} \ge \frac{\log 2}{2d^2}+O(d^{-3}).
\end{equation}

\end{lemma}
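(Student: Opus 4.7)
The strategy parallels the spread-out case. By monotonicity of $\bar Z_N$ in $p$, we may reduce to $p=1$, and the inequality $\log\bar Z_N\ge (\log 2)\,\mathbf{1}_{\bar Z_N\ge 2}$ gives
\[\bE\bbE_1[\log\bar Z_N]\ge (\log 2)\,\bE\bbP_1(\bar Z_N\ge 2).\]
I would focus on length-$2$ bridges: for each $a\in\{0,\dots,N-2\}$, let $E_a$ denote the event that some $z\ne T_{a+1}$ makes both edges $((a,T_a),(a+1,z))$ and $((a+1,z),(a+2,T_{a+2}))$ open in $\tilde X$. The occurrence of any single $E_a$ suffices to force $\bar Z_N\ge 2$.

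The first quantitative step is the single-position probability $q:=\bE\bbP_1(E_a)$, obtained by case analysis on the walk increments $(\xi_{a+1},\xi_{a+2})$. In the \emph{loop} case $T_{a+2}=T_a$ (probability $1/(2d)$), there are $2d-1$ candidate alternative $2$-paths, each open with probability $1/(4d^2)$, so at least one opens with probability $\approx(2d-1)/(4d^2)$, contributing $\tfrac1{4d^2}+O(d^{-3})$ to $q$. In the \emph{diagonal} case $T_{a+2}-T_a=\pm e_i\pm e_j$ with $i\ne j$ (probability $(d-1)/d$), the unique ``swap'' alternative is open with probability $1/(4d^2)$, contributing another $\tfrac1{4d^2}+O(d^{-3})$. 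The remaining case (two identical steps) admits no length-$2$ alternative. Summing, $q=\tfrac1{2d^2}+O(d^{-3})$.

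The second step upgrades this to the product-type estimate $\bbP_1(\bar Z_N=1)\le(1-q)^N$ up to lower-order corrections. The crucial observation is that for $a\ne a'$ the oriented edges involved in $E_a$ and $E_{a'}$ are pairwise disjoint and disjoint from the spine of $\tilde X$: each bridge is forced through an intermediate vertex $(a+1,z)$ with $z\ne T_{a+1}$, which prevents its edges from matching the corresponding edges of a bridge at another position. Hence, conditionally on the spine $T$, the events $(E_a)_a$ are independent and
\[\bbP_1(\bar Z_N=1\mid T)\le\prod_{a=0}^{N-2}\bigl(1-p_a(T)\bigr),\qquad p_a(T):=\bbP_1(E_a\mid T).\]
Taking expectation over $T$ and handling the short-range ($2$-dependent) correlations among the $p_a(T)$'s — either via a transfer-matrix/chain identity for the walk increments $(\xi_k)$, or at a mild cost in the constant by restricting to every second index — produces the required bound. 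Specializing to $N=d$ and using $1-(1-\tfrac1{2d^2})^d=\tfrac1{2d}+O(d^{-2})$ then gives the corollary.

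The main obstacle is exactly this last bookkeeping step: turning the conditional independence of the $(E_a)$'s into a clean $N$-fold product rather than the easier $\lfloor N/2\rfloor$-fold product that comes from grouping disjoint blocks of two indices. Once this is in place, the rest of the proof is a direct adaptation of the spread-out lemma.
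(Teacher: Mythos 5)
Your decomposition --- length-$2$ bridges only, the three-case analysis by the relation between consecutive increments, conditional edge-disjointness of the bridges given the spine $T$ --- is exactly the paper's, and your per-site estimate $q=\tfrac1{2d^2}+O(d^{-3})$ is correct. The genuine gap is in the step you yourself flag as ``the main obstacle'': neither of the two fixes you sketch closes it as required. Restricting to every second index halves the exponent and yields only $1-(1-\tfrac1{2d^2})^{N/2}$, so at $N=d$ you get $\tfrac{\log 2}{4d^2}$ rather than the claimed $\tfrac{\log 2}{2d^2}$; this does not recover the lemma, nor the constant in Theorem~\ref{mainres}. The transfer-matrix route would succeed but is left entirely undone. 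The observation that removes the obstacle altogether --- and which the paper uses implicitly when it raises the one-step average to the power $N-1$ --- is that the case variables are not merely $2$-dependent: they are i.i.d.\ Write $\xi_a:=T_a-T_{a-1}$ and let $\eta_a\in\{0,1,2\}$ record which of your three cases holds for the pair $(\xi_a,\xi_{a+1})$. Because the increment law is uniform on $\{\pm e_1,\dots,\pm e_d\}$, for every value $u$ of $\xi_a$ one has
\[
\bbP(\eta_a=0\mid\xi_a=u)=\tfrac1{2d},\qquad \bbP(\eta_a=1\mid\xi_a=u)=\tfrac1{2d},\qquad \bbP(\eta_a=2\mid\xi_a=u)=\tfrac{d-1}{d},
\]
independently of $u$; hence $\eta_a$ is independent of $\sigma(\xi_1,\dots,\xi_a)\supseteq\sigma(\eta_1,\dots,\eta_{a-1})$, and the $(\eta_a)_a$ are i.i.d. Combined with the conditional independence of the bridge-opening events given $T$, this gives directly
\[
\bE\,\bbP_1\bigl[\bar Z_N=1\bigr]\;\le\;\Bigl[\tfrac1{2d}\bigl(1-(2d)^{-2}\bigr)^{2d-1}+\tfrac{d-1}{d}\bigl(1-(2d)^{-2}\bigr)+\tfrac1{2d}\Bigr]^{N-1},
\]
with no transfer matrix and no loss in the constant. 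As written, your proof is therefore incomplete: the bookkeeping step is in fact the crux, and it is resolved by this independence observation rather than by either of the alternatives you outline.
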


As a consequence of the Lemma,  by monotonicity in $p$ one gets that \eqref{missmay} holds as soon as 

\begin{equation}
 p< \exp \left( \frac 1 d \bE \bbE_1 \log  \bar Z_{d} \right).
\end{equation}
and this gives the wanted lower-bound for $p_c^{(2)}$ of Theorem \ref{mainres}.

\begin{proof}
We consider only the case $p=1$.

We use the same strategy as for the spread-out model, but here we only need to consider the bridges of length $2$.
Set $(e_1,\dots, e_d)$ the canonical base of $\bbZ^d$ ( $e_i:=(0,\dots,0 ,1,0,\dots,0)$ where $1$ lies in the $i$-th position) and $(e_{d+1},\dots, e_{2d}):=-(e_{d},\dots, e_{d})$

Given a realization of $T$ (which in this case is a nearest neighbor simple random walk on $\bbZ^d$), the number of possibily to have a bridge of length $2$ between $a-1$ and $a+1$ 
depends on the local configuration of $T$ (see Fig. \ref{bridge2}):

\begin{itemize}
 \item $(i)$ If $T_a-T_{a-1}=T_{a+1}-T_a$ then there is no possibility. 
 \item $(ii)$ If $T_a-T_{a-1}=-T_{a+1}+T_a$ (i.e.\ if $T_{a-1}:=T_{a+1}$) 
then there are $2d-1$ possibilities for having a bridge, each has probability $(1/2d)^2$:
 opening the edges $[(a-1,T_{a-1}),(a,T_a)]$ and $[(a,T_{a}+e_i),(a+1,T_{a+1})]$ where $i$ is such that $e_i\ne T_{a}-T_{a-1}$.
 \item $(iii)$ In all other cases, there is only one possibility which is opening the edges $[(a-1,T_{a-1}),(a,T_{a-1}+e_{i})]$ 
and $[(a,T_{a-1}+e_i), (a+1,T_{a+1})]$ where $e_{i}=T_{a+1}-T_a$.
\end{itemize}
Then we note that all the bridges over $T$ length $2$ are pairwise edge-disjoint, so that given $T$ each 
bridges are open independently with probability $(2d)^{-2}$ (recall that $p=1$).

\begin{figure}[hlt]
\begin{center}
\leavevmode 
\epsfxsize =14 cm
\psfragscanon
\epsfbox{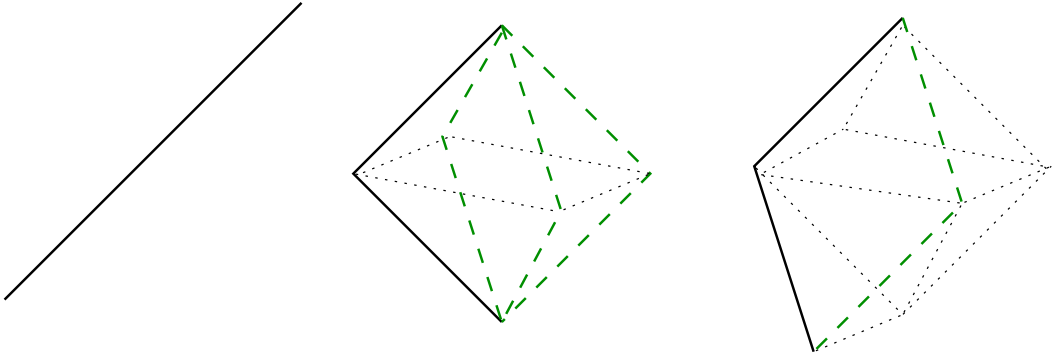}
\end{center}
\caption{\label{bridge2}
Schematic representation of the three different possibilities listed in the $1+2$ dimensional case, trajectories are oriented along the vertical direction.
The full line represent the portion of trajectory and the dashed ones, the potential bridges of length $2$. 
From left to right: $(i)$ If the two increments for the walk are equal, there is no possibility to build a bridge, $(ii)$
if they are opposite, one has $3=2*2-1$ options for bridges, $(iii)$ if the two increments are along different dimension, there is only one way to build a bridge (by inverting 
the order of these increments).}
\end{figure}

Thus

\begin{multline}
 \bbP_1 [\bar Z_N = 1]\le \bbP_1\left[\text{ All bridges of length $2$ are closed }\right]\\
\le \left(1-(2d)^{-2}\right)^{(2d-1)\#\{a\in [1,N-1] \ | \ T_{a-1}=T_{a+1}\}+
\#\{a\in [1,N-1]  \ |  \ T_{a+1}-T_{a}\ne \pm T_{a}-T_{a-1} \}}.
\end{multline}

Averaging with respect to $T$ gives

\begin{equation}
  \bE\bbP_1 [\bar Z_N = 1]\le \left[ \frac 1 {2d} \left(1-(2d)^{-2}\right)^{(2d-1)}+\frac {d-1} d  \left(1-(2d)^{-2}\right)
+\frac 1 {2d}\right]^{N-1} =\left[1-\frac{1}{2d^2}+O(d^{-3}) \right]^N.
\end{equation}
Thus

\begin{equation}
   \bE\bbE_1 [\log \bar Z_N ]\ge \log 2 \left[1-\left[1-\frac{1}{2d^2}+O(d^{-3}) \right]^N\right].
\end{equation}
\end{proof}

\medskip

{\bf Acknowledgement:} The author is very grateful to N.\ Yoshida for suggesting this problem, and enlightening discussions, and to
D.\ Villemonais for valuable comments on the manuscript.

This work was written during the author long-term stay in IMPA, he acknowledges kind hospitality and support.

\end{document}